\newcommand{\E}{\mathbb{E}}
\newcommand{\R}{\mathbb{R}}
\newcommand{\N}{\mathbb{N}}
\newcommand{\Char}{\mathbbm{1}}
\newcommand{\ones}{\mathbf{1}}
\newcommand{\mM}{\mathbf{M}}
\newcommand{\mW}{\mathbf{W}}
\newcommand{\mD}{\mathbf{D}}
\newcommand{\mJ}{\mathbf{J}}
\newcommand{\mQ}{\mathbf{Q}}
\newcommand{\mP}{\mathbf{P}}
\newcommand{\mV}{\mathbf{V}}
\newcommand{\mU}{\mathbf{U}}
\newcommand{\mF}{\mathbf{F}}
\newcommand{\mS}{\mathbf{S}}
\newcommand{\mA}{\mathbf{A}}
\newcommand{\mB}{\mathbf{B}}
\newcommand{\mK}{\mathbf{K}}
\newcommand{\mZ}{\mathbf{Z}}
\newcommand{\eye}{\mathbf{I}}
\newcommand{\0}{\mathbf{0}}
\newcommand{\vd}{\mathbf{d}}
\newcommand{\vx}{\mathbf{x}}
\newcommand{\vy}{\mathbf{y}}
\newcommand{\ve}{\mathbf{e}}
\newcommand{\vv}{\mathbf{v}}
\newcommand{\vu}{\mathbf{u}}
\newcommand{\vf}{\mathbf{f}}
\newcommand{\vg}{\mathbf{g}}
\newcommand{\vh}{\mathbf{h}}
\newcommand{\prob}{\mathbb{P}}
\newcommand{\cG}{\mathcal{G}}
\newcommand{\cN}{\mathcal{N}}
\newcommand{\cE}{\mathcal{E}}
\newcommand{\cL}{\mathcal{L}}
\newcommand{\cO}{\mathcal{O}}
\newcommand{\cK}{\mathcal{K}}
\newcommand{\Int}{\text{Int}}
\newcommand{\vpi}{\boldsymbol{\pi}}
\newcommand{\vmu}{\boldsymbol{\mu}}
\newcommand{\vnu}{\boldsymbol{\nu}}
\newcommand{\bfdelta}{\boldsymbol \delta}
\newcommand{\bfgamma}{\boldsymbol \gamma}
\newcommand{\bfepsilon}{\boldsymbol \epsilon}
\theoremstyle{plain}
\newtheorem{theorem}{Theorem}[section]
\newtheorem{proposition}[theorem]{Proposition}
\newtheorem{lemma}[theorem]{Lemma}
\newtheorem{corollary}[theorem]{Corollary}
\theoremstyle{definition}
\newtheorem{assumption}{}
\theoremstyle{remark}
\newtheorem{remark}[theorem]{Remark}
\icmltitlerunning{SRRW on General Graphs}
\begin{document}

\twocolumn[
\icmltitle{Self-Repellent Random Walks on General Graphs - \\
                Achieving Minimal Sampling Variance via Nonlinear Markov Chains}



\icmlsetsymbol{equal}{*}

\begin{icmlauthorlist}
\icmlauthor{Vishwaraj Doshi}{to}
\icmlauthor{Jie Hu}{goo}
\icmlauthor{Do Young Eun}{goo}
\end{icmlauthorlist}

\icmlaffiliation{to}{IQVIA Inc., Plymouth Meeting, USA.}
\icmlaffiliation{goo}{Department of Electrical and Computer Engineering, North Carolina State University, Raleigh, USA.}

\icmlcorrespondingauthor{Do Young Eun}{dyeun@ncsu.edu}

\icmlkeywords{Nonlinear Markov Chains, Markov Chain Monte Carlo, General Graph, Stochastic approximation,  Graph Sampling}

\vskip 0.3in
]



\printAffiliationsAndNotice{}  

\begin{abstract}
We consider random walks on discrete state spaces, such as general undirected graphs, where the random walkers are designed to approximate a target quantity over the network topology via sampling and neighborhood exploration in the form of Markov chain Monte Carlo (MCMC) procedures. 
Given any Markov chain corresponding to a target probability distribution, we design a \textit{self-repellent random walk} (SRRW) which is less likely to transition to nodes that were highly visited in the past, and more likely to transition to seldom visited nodes. 
For a class of SRRWs parameterized by a positive real $\alpha$, we prove that the empirical distribution of the process converges almost surely to the the target (stationary) distribution of the underlying Markov chain kernel. 
We then provide a central limit theorem and derive the exact form of the arising asymptotic co-variance matrix, which allows us to show that the SRRW with a stronger repellence (larger $\alpha$) always achieves a smaller asymptotic covariance, in the sense of Loewner ordering of co-variance matrices. 
Especially for SRRW-driven MCMC algorithms, we show that the decrease in the asymptotic sampling variance is of the order $O(1/\alpha)$, eventually going down to zero. 
Finally, we provide numerical simulations complimentary to our theoretical results, also empirically testing a version of SRRW with $\alpha$ increasing in time to combine the benefits of smaller asymptotic variance due to large $\alpha$, with empirically observed faster mixing properties of SRRW with smaller $\alpha$.
\end{abstract}
\section{Introduction} \label{section:introduction}

Random walk based techniques are a staple in statistics and learning theory. Markov chains such as the Metropolis Hastings random walk, designed to achieve any given target probability distribution as its stationary measure, are widely used as Markov chain Monte Carlo (MCMC) samplers and in distributed optimization via stochastic gradient descent \citep{sun2018on, hu2022efficiency_arxiv}. The local nature of the information required to compute state transition probabilities means that the algorithms scale well and are robustly implementable over state spaces such as large graphs/networks with general topologies. However, classic Markov chains can often be victims of limitations set by the underlying topology of the state space (communication matrix or adjacency matrix of the underlying network structure) leading to correlated samples which can negatively affect the estimator performance. It has also been well established that the time-reversibility requirement for the classical MCMC samplers is one of the causes for their slow convergence \citep[see][Section 1]{andrieu2021peskun}. One way in which this problem has been approached in the literature is via construction of non-reversible versions of the base Markov chain \citep{diaconis2000analysis, turitsyn2011irreversible, chen2013accelerating, ma2016unifying, thin2020nonreversible}, which is often done by inducing some form of \textit{non-backtracking} behaviour, that is, avoiding states most recently visited by the random walker \citep{ alon2007non}. This involves the random walker interacting with some of its own past history, and has been shown to possess better efficiency than the original \textit{base} Markov chain in the sense of the MCMC estimator achieving a smaller asymptotic variance \citep{neal2004improving, LeeSIGMETRICS12}. Since these non-backtracking based methods only utilize the most \textit{recent} history of the random walker and are still provably more efficient, it is natural to consider the design of protocols where the random walker interacts with its \textit{entire} past history to speed up its diffusion and increase its sampling efficiency, especially for sampling over discrete state spaces. This is the approach taken in our paper.

Let $\cG(\cN,\cE)$ be an undirected, connected graph where $\cN \triangleq \{1,\cdots, N\}$ denotes the set of nodes and $\cE$ denotes the set of edges, where we say $(i,j)\in\cE$ if there is an edge between nodes $i,j\in\cN$. We use $\mA=[a_{ij}]_{i,j\in\cN}$ to represent the adjacency matrix of the graph, where $a_{ij}>0$ if $(i,j)\in\cE$, and zero otherwise; $\cN(i) \triangleq \{ j \in \cN ~|~ (i,j)\in\cE \}$ refers to the set of neighbors of node $i$; $\text{deg}(i) \triangleq \sum_{j \in \cN} a_{ij}$ will refer to the degree of each node $i\in\cN$. Denote by $\Sigma$ the $N$-dimensional probability simplex over $\cN$, with $\Int(\Sigma)$ denoting its interior, and let $\mP \triangleq [P_{ij}]_{i,j\in\cN}$ be the transition probability matrix of an ergodic, time-reversible Markov chain over $\cN$, with its stationary distribution $\vmu \triangleq [\mu_i]_{i \in \cN}$. Without loss of generality, we assume $P_{ij} > 0$ if and only if $a_{ij}>0$. In this setup, we design \textit{Self-Repellent Random Walks} (SRRWs) on general graphs\footnote{We consider graphs because they represent a generalization of (discrete) finite state spaces by imposing a communication (adjacency) matrix. The existence of an edge between two nodes (states) represents a non-zero probability of state transitions between the two nodes.} indexed by a tunable parameter $\alpha \geq 0$, all of which can sample from $\vmu\in\Sigma$, and then study their sampling `efficiency' as a function of $\alpha$ (with $\alpha = 0$ being equivalent to the baseline Markov chain with transition kernel $\mP$).

\paragraph{The SRRW transition kernel:}
Consider the Markov chain kernel (transition matrix) $\mK[\vx] \triangleq [K_{ij}[\vx]]_{i,j\in\cN}$, whose transition probabilities are mappings $K_{ij}:\Sigma \to [0,1]$, given by
\begin{equation}\label{eqn:general srrw kernel}
    K_{ij}[\vx] \triangleq \frac{P_{ij} r_{\mu_j}(x_j)}{\sum_{k\in\cN}P_{ik} r_{\mu_k}(x_k)},
\end{equation}
for any probability vector $\vx \triangleq [x_i]_{i \in \cN} \in \Sigma$. Here, $\{ r_{\mu_i} \}_{i \in \cN}$ is a family of positive functions $r_{\mu_i}:[0,1]\to\R_+$ parameterized by $\mu_i$, with $r_{\mu_i}(x_i)$ \textit{decreasing} in $x_i \in [0,1]$ and $r_{\mu_i}(\mu_i) = C$, for all $i \in \cN$.\footnote{As we shall see later, $x_i$ will be directly proportional to the visit count to any node $\in\cN$, since $\vx\in \Sigma$ will be the empirical distribution of the self-repellent random walk.} Transition probability kernels defined in this fashion, taking probability distributions as argument, are called \textit{`nonlinear'} Markov kernels \citep{Andrieu2007Nonlinear,Andrieu2011OnNonlinearMCMC}, as opposed to classical Markov chains with kernels $\mP$ that are often interpreted as linear operators -- the transition probabilities at each step being independent of $\vx$ (i.e., the case where $r_{\mu_i}(\cdot)$ is a constant function).

Stochastic processes utilizing nonlinear Markov kernels are called nonlinear Markov chains, and can be simulated/generated using \textit{self-interacting Markov chains} (SIMCs) \citep[see][]{Moral2004OnConvergence, Moral2006SelfInteracting, Moral2010Interacting}. Let $\{X_n\}_{n \geq 0}$ be a random walker over $\cN$, and let $\vx_n$ be its \textit{occupational measure} or \textit{historical empirical distribution} up to time $n\geq 0$, written as 
\begin{equation}\label{eqn:empirical_dist_1}
    \vx_n \triangleq \frac{1}{n+1}\sum_{k = 0}^n \bfdelta_{X_k},
\end{equation}
where $\bfdelta_{X_k}$ is the delta measure whose $X_k$'th entry is one and the rest are zero, thus recording the position of the random walker at time $k\geq 0$. The process $\{X_n\}_{n \geq 0}$ becomes a SIMC if at each time step $n\geq 0$, the random walker makes transitions according to some nonlinear kernel $\mK[\vx_n]$, not necessarily as defined in \eqref{eqn:general srrw kernel}. We say that the process $\{X_n\}_{n \geq 0}$ is a SRRW if it is a SIMC with $\mK[\vx]$ as in \eqref{eqn:general srrw kernel}. We use the term \textit{self-repellent} since at each time step, the transition probability to a node $j\in\cN$ is proportional to $r_{\mu_j}([\vx_n]_j)$ where $[\vx_n]_j = \frac{1}{n}\sum_{k=1}^{n}  \Char_{\{X_k=j\}}$, and is thus a decreasing function of the visit count to $j\in\cN$. In other words, the walker is less likely to move to a node that has been visited more often so far (thus self-repellent).

When $P_{ij} \propto a_{ij}$ in \eqref{eqn:general srrw kernel} for each $i\in\cN$, the SRRW is a self-repellent version of the well-known \textit{simple random walk} (SRW) procedure, with the target distribution being proportional to the degree of the nodes, that is, $\mu_i \propto \deg(i)$ for all $i\in\cN$. Like most general MCMC procedures, the SRRW kernel can also be defined for \textit{any given} sampling distribution $\vmu\in\Int(\Sigma)$, for instance, by setting $\mP$ to be the transition matrix of a Metropolis Hastings Random Walk (MHRW) with stationary distribution $\vmu$. For example if $\mu_i = 1/N$, that is $\vmu = \frac{1}{N}\ones$ -- the uniform distribution over the set of nodes $\cN$, then we can choose $P_{ij} = \min \left\{ \frac{1}{\deg(i)},\frac{1}{\deg(j)} \right\}$ for all $(i,j)\in\cE$, with $P_{ii} = 1 - \sum_{j\neq i} P_{ij}$. The matrix $\mP$ defined in this manner is the MHRW kernel with the uniform distribution as its stationary measure, and is among the most commonly used kernels for unbiased graph sampling \cite{li2015random} and distributed optimization \cite{sun2018on}. The elegance in the Metropolis Hastings algorithm and the key to its widespread adaptation lies in the fact that at each time step, the entries of $\vmu$ need only to be known for the neighbouring nodes of the random walkers (that is, only \textit{local} information required), and only up to a constant multiple. This property ensures a robust, scalable implementation of the MHRW, since global constants are often unknown for large networks a priori.

Our SRRW construction begins with $r_{\mu_i}(\cdot)$ taking a polynomial form for all $i \in \cN$, given by
\begin{equation} \label{eqn:polynomial_repellence}
    r_{\mu_i}(x_i) \triangleq \left(\frac{x_i}{\mu_i}\right)^{-\alpha}, ~~~~~\forall ~\alpha \geq 0,
\end{equation}
where the parameter $\alpha\geq 0$ can be perceived as the \textit{strength of the self-repellence mechanism} designed into the SRRW transition kernel. Similar to the MHRW transition kernel, only the local information regarding entries of $\vmu$ needs to be known at any given time step, and up to only a constant multiple. For convenience, we formalize this property as \textit{scale-invariance (S.I.)}: an SRRW kernel posesses S.I. if for all $i,j\in\cN$
\begin{enumerate}
    \item[(i)] Computing $K_{ij}[\vx]$ only requires knowing $\mu_k$ for $k \in \cN(i)$, and only up to a constant multiple for any $i\in\cN$;
    \item[(ii)] $K_{ij}[C'\vx] = K_{ij}[\vx]$ for any constant $C' > 0$. 
\end{enumerate}
Indeed, we show in Appendix \ref{appendix:discussion on polynomial} that out of all possible forms for the functions $r_{\mu_i}(\cdot)$, only the polynomial form as in \eqref{eqn:polynomial_repellence} possesses the S.I. property. Henceforth, we restrict ourselves to the polynomial form of $r_{\mu_i}(\cdot)$.

\paragraph{Our Contribution:}

\begin{enumerate}
    \item We show that given any MCMC kernel $\mP$ which samples from a target distribution $\vmu$, the corresponding SRRW is asymptotically more \textit{efficient} as a random walk based sampler. We do this by first showing that 
        \begin{equation}
            \vx_n \xrightarrow[n \to \infty]{\text{a.s.}} \vmu, ~~~~~\forall ~\alpha \geq 0.
        \end{equation}
    We then provide second-order convergence results in the form of a central limit theorem (CLT); that is, we show there exists an \textit{asymptotic co-variance matrix} $\mV(\alpha) \in \R^{N \times N}$ parameterized by $\alpha \geq 0$, such that
    \begin{equation}
        \sqrt{n}(\vx_n - \vmu) ~\xrightarrow[dist.]{n\to \infty}~ N(\0,\mV(\alpha)).
    \end{equation}
    We obtain these results by first viewing the SRRW as a stochastic approximation (SA) algorithm with state-dependent noise \citep{kushner1997,fort2015central}, allowing us to form a connection between the stochastic process and a deterministic system of ordinary differential equations (ODEs). We establish global convergence results for this ODE system, which lay the foundations for proving almost sure convergence of the stochastic SRRW process.

    \item For any $\alpha \geq 0$ we derive the \textit{exact} form of $\mV(\alpha)$ in terms of $\alpha$ and the spectrum (eigenvalues and eigenvectors) of $\mP$. This allows us to show that kernels parameterized by larger $\alpha$ are asymptotically more \textit{efficient} samplers; that is, they achieve smaller sampling variance. This is done by showing that the asymptotic covariance matrices follow a \textit{Loewner ordering}:\footnote{Matrices $\mA$, $\mB$ follow the Loewner ordering $\mA <_L \mB$ if $\mA \neq \mB$ and $\mB-\mA$ is positive semi-definite.}
    \begin{equation}\label{eqn:covariance_ordering_intro}
        \mV(\alpha_1) <_L \mV(\alpha_2), ~~~~~\forall ~ \alpha_1 > \alpha_2 \geq 0.
    \end{equation}
     In other words, as long as the numerical/computational stability of the random walk implementation can be ensured, larger values of $\alpha$ are always more favourable in terms of achieving a smaller (asymptotic) sampling variance. We also derive an upper bound on the ratio of its sampling variance over that of the baseline Markov chain, and show that this upper bound \textit{goes down to zero} as $\alpha \to \infty$ with speed $O(1/\alpha)$. This is surprising because asymptotically for large enough $\alpha$, the SRRW, which is a stochastic process whose trajectories are constrained by `walking' on the underlying communication matrix of the network, achieves smaller sampling variance than an \textit{i.i.d.} sampler\footnote{This corresponds to a sampler that can visit any node $i$ with probability $\mu_i$ independent of its previous position at any given time. Clearly, in the graph setting, this requires the sampler to 'jump' to any other node by ignoring the underlying network structure altogether - something  which random walkers on general graphs are not permitted to do.} whose variance is always a constant positive value.

    \item  We confirm our theoretical results by numerically observing the predicted asymptotic performance ordering of SRRW over a wide range of $\alpha \geq 0$ for the task of MCMC sampling over various graph topologies. To effectively handle potentially slower mixing of the SRRW process in 
    the initial transient period for large $\alpha$, we provide simulation results for an SRRW with \textit{time-varying} $\alpha$. Starting with smaller values of $\alpha$ which monotonically increase with time, we show that the empirically observed superior mixing of processes with smaller $\alpha$ can be combined with the theoretically proven asymptotic efficiency of SRRW with larger $\alpha$, resulting in a far more efficient MCMC algorithm. 

\end{enumerate}

\paragraph{Related Works:}
While numerous version of self-repelling walks on countable state spaces have been studied in the past, we are primarily interested in random walk kernels which do not absolutely forbid (with probability one) transitions to previously visited nodes. The SRRW defined in our paper, which still allows transitions to past visited nodes with positive probability, falls into a class of `weakly' self-avoiding random walks \citep{Amit1983asymptotic, Toth1995bond, veto2008self, grassberger2017self}. The works \textit{ibid.} study weakly self-avoiding random walks on topologies such as $1$ or $d$-dimensional lattices, and provide theoretical results on properties such as recurrence properties, escape times from sets or average cover times. Thus, the analytical focus is not on convergence properties of some statistical attributes of the random walks to a particular target, making it difficult to utilize these existing results to design algorithms for learning and statistical inference on \textit{general graphs}.

Works which do study convergence properties of empirical distributions of random walks with repellent dynamics consider processes where the repellence is between two \citep{Chen2014Two} or more \citep{rosales2022vertex} particles, and the results are again limited to random walks on complete graphs. Even thought the random walks therein are not self-repellent in our sense, the analysis techniques used are similar to ours, and are substantially influenced by the stochastic approximation framework in \cite{Benaim97vertex} for the analysis of vertex \textit{reinforced} random walks (VRRW) \citep[also see][]{benaim2011dynamics, benaim2012strongly}. As the name might suggest, the nonlinear Markov kernels considered in this literature are self-attractive, that is, the random walker transitions to more frequently visited nodes with greater probability. The formulation is very general, and the attraction is captured by generic, monotonically \textit{increasing} functions, which is in contrast to the monotonically \textit{decreasing} $r_{\mu_i}(x_i)$ terms in our SRRW kernel as in \eqref{eqn:general srrw kernel}. Even with such a general formulation, results showing convergence of the empirical measure to identifiable probability distributions are restricted to special cases such as complete graphs \citep{benaim2012strongly}. Consequently, the analytical focus is shifted towards providing \textit{localization} results, analyzing the property of the VRRW to get trapped inside subsets of the graph under consideration \citep{tarres2004vertex, angel2014localization}. Thus, even though the dynamics of VRRWs may look similar at first to the ones studied in our paper, the two types of random walks are essentially opposites of each other. The SRRW is specifically constructed for \textit{sampling from any arbitrarily given target} distribution over any \textit{general graph}, while the VRRW has found applications in fields such as ecology for understanding the behaviour of organisms like certain bacteria \citep{stevens1997aggregation, pemantle2007survey}, and can be interpreted as \textit{learning an unknown target} with potential applications to solving certain optimization problems \citep{avrachenkov2021dynamic} \textit{over a special graph} such as a grid or a complete graph. Note that the SRRW kernel with $\alpha<0$ in \eqref{eqn:polynomial_repellence} is actually self-attractive, and therefore a VRRW. Our analytical approach covers such cases, and our first and second order convergence results actually hold for all $\alpha$ greater than a certain negative threshold value. For values of $\alpha$ smaller than this threshold (that is, the dynamics are more self-attractive in nature), we numerically observe non-convergence to the target distribution. This is analogous to the phase-transition behaviour often observed for VRRWs \citep{pemantle1988phase,volkov2006phase,akian2007multiple} and puts our work in line with the broader literature of reinforced random walks.

As mentioned earlier, the SRRW described in this paper is a self-interacting Markov chain (SIMC), which can themselves be categorized under \textit{adaptive} MCMC algorithms, with the kernel parameter being the empirical distribution of the process itself \citep{fort2011convergence, fort2014central}. While numerous works study such processes on continuous state spaces \citep{Andrieu2007Nonlinear, andrieu2007convergence, andrieu2008note, Andrieu2011OnNonlinearMCMC}, they focus on the construction of kernels employing \textit{random jumps} to other (potentially distant) states depending on their historical visit counts, and are typically designed to tackle specific challenges such as sampling from multi-modal distributions and sequential Monte Carlo sampling, with performance improvements typically shown via simulation results. In practice, however, information regarding the shape of the target distribution such as its modality is unknown. These methods are also largely inapplicable for finite state spaces such as graphs where the underlying communication matrix determines the possible state transitions, therefore prohibiting jumps between non-neighbouring nodes.

\paragraph{Structure of the paper:}

In the rest of the paper, we first present some preliminaries and set up the SRRW procedure as a stochastic approximation in Section \ref{Section:algorithmic setup}. In Section 3, we then analyze a mean-field ODE system whose asymptotic dynamics are closely related to our stochastic process; proving uniqueness of the target distribution $\vmu \in \Int(\Sigma)$ as its fixed point and showing global convergence of its solutions to $\vmu$ with the help of a Lyapunov function.  Our main results corresponding to our contributions as discussed earlier in the introduction are provided in Section \ref{section:stochastic_analysis}, where we also discuss the application of our SRRW scheme to MCMC sampling. In Section \ref{section:numerical_results} we provide numerical results supporting our theoretical findings along with additional tests that showcase the advantages of employing the SRRW with time-varying $\alpha$, before concluding in Section \ref{section:conclusion}.
\section{Algorithmic setup}\label{Section:algorithmic setup}

In this section, we first standardize some basic notations which will be used throughout the paper, and review preliminary concepts regarding time-reversible Markov chains and their spectrum. We then introduce the SRRW iteration as a stochastic approximation algorithm along with a deterministic system whose asymptotic dynamics are closely related to our stochastic iteration.

\subsection{Preliminaries}\label{subsection: basic notations}

\paragraph{Basic Notations:}
We use lower-case boldface letters to denote column vectors (e.g. $\vv \triangleq [v_i]_{i\in\cN} \in \R^N$), and upper-case boldface letters to denote matrices (e.g. $\mM \triangleq [M_{ij}]_{i,j\in\cN} \in\R^{N\times N}$). The matrix $\eye$ always denotes the identity matrix (its dimension inferred by the context) and for any vector $\vv\in\R^N$, we define $\mD_{\vv}$ as the diagonal matrix with $v_i$ as its $i$-th diagonal entry. Throughout the paper, the terms $\prob(\cdot)$ and $\E[\cdot]$ will stand for probability of an event and expectation of a random variable respectively, and we use $\| \cdot \|_p$ to refer to the $L^p$ norm.

\paragraph{Time-Reversible Markov Chains:} The following definitions and properties are classical in Markov chain literature and will be stated without proof. We point the readers to \citep[Chapter 3.4]{aldous} for a more comprehensive review of the spectral properties stated below.
A Markov chain with kernel $\mP\in[0,1]^{N \times \N}$ is \textit{time-reversible} if there exists a probability distribution $\vmu \in \Sigma$ such that the pair $(\mP,\vmu)$ solve the \textit{detailed balance equation} (DBE), that is, $\mu_iP_{ij} = \mu_jP_{ji}$ for all $i,j \in \cN$. Consequently, $\vmu$ is also the \textit{stationary distribution} or \textit{invariant measure} of $\mP$, and solves $\vmu^T \mP = \vmu^T$. A time-reversible Markov chain's kernel $\mP$ has all eigenvalues on the real line. We denote by $(\lambda_i, \vu_i)$ (by $(\lambda_i,\vv_i)$) its left (right) eigenpair, where the eigenvalues are ordered such that $1 = \lambda_N > \lambda_{N-1} \geq \cdots \lambda_1 \geq -1$, with $\vu_N = \vmu$ and $\vv_N = \ones$. Moreover, we can deduce that $\vu_i = \mD_{\vmu}\vv_i$ and $\vu_i^T \vv_i = 0$ for all $i\in\cN$.

\subsection{SRRW iteration and the related mean-field ODE}

Consider the SRRW on $\cG(\cN,\cE)$ as defined in Section \ref{section:introduction}, where for any ergodic and time-reversible $\mP$ with corresponding stationary measure $\vmu$, and any $\alpha \geq 0$, the function $r_{\mu_i}(\cdot)$ in the non-linear kernel in \eqref{eqn:general srrw kernel} is of the polynomial form as in \eqref{eqn:polynomial_repellence}. In this setup, the transition kernel $\mK[\vx]$ is well-defined for all $\vx \in \Int(\Sigma)$, and we have the following result, whose proof is deferred to Appendix \ref{appendix:algorithmic setup}.

\begin{proposition} \label{prop:form of stationary dist}
For any $\vx \in \Int(\Sigma)$, there exists a unique stationary measure $\vpi(\vx) \triangleq [\pi_i(\vx)]_{i\in\cN} \in \Int(\Sigma)$, where
\begin{equation}\label{eqn:pi_x closed form}
\pi_i(\vx) \propto {\sum \limits_{j\in\cN} \mu_i P_{ij} \left(\frac{x_i}{\mu_i}\right)^{\!-\alpha} \!\left(\frac{x_j}{\mu_j}\right)^{\!-\alpha} }
, ~~\forall ~i\in \cN,
\end{equation}
and the pair $\left(\mK[\vx], \vpi(\vx)\right)$ solves the DBE, that is, $\pi_i(\vx)K_{ij}[\vx] = \pi_j(\vx)K_{ji}[\vx]$ for all $i,j\in\cN$.
\end{proposition}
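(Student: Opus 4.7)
The plan is to verify the claimed formula directly by guessing the right ansatz and then checking detailed balance. The natural guess is motivated by trying to cancel the awkward denominator $\sum_k P_{ik}(x_k/\mu_k)^{-\alpha}$ appearing in $K_{ij}[\vx]$. If I write $S_i(\vx) \triangleq \sum_k P_{ik}(x_k/\mu_k)^{-\alpha}$ so that $K_{ij}[\vx] = P_{ij}(x_j/\mu_j)^{-\alpha}/S_i(\vx)$, then setting $\pi_i(\vx) \propto \mu_i (x_i/\mu_i)^{-\alpha} S_i(\vx)$ makes $S_i$ cancel when forming the product $\pi_i(\vx) K_{ij}[\vx]$. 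Note this is exactly the closed form in \eqref{eqn:pi_x closed form} after factoring $\mu_i (x_i/\mu_i)^{-\alpha}$ out of the sum.

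With that ansatz in hand, I would carry out the verification in three short steps. First, normalize: since $\vx \in \Int(\Sigma)$ gives $x_i > 0$, and since $\mu_i > 0$ and $S_i(\vx) > 0$ for every $i$ (the latter because at least one term $P_{ii}$ or some $P_{ik}$ is strictly positive along with the positive factor $(x_k/\mu_k)^{-\alpha}$), the vector $\vpi(\vx)$ has strictly positive entries and hence lies in $\Int(\Sigma)$ after dividing by its sum. Second, verify the DBE: a direct computation yields
\begin{equation*}
\pi_i(\vx) K_{ij}[\vx] \,\propto\, \mu_i P_{ij}\left(\tfrac{x_i}{\mu_i}\right)^{-\alpha}\!\left(\tfrac{x_j}{\mu_j}\right)^{-\alpha},
\end{equation*}
and the right-hand side is symmetric in $(i,j)$ because $(\mP,\vmu)$ itself satisfies $\mu_i P_{ij} = \mu_j P_{ji}$ by the assumed reversibility of the base chain. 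Swapping the roles of $i$ and $j$ on the right-hand side gives $\pi_j(\vx) K_{ji}[\vx]$, proving detailed balance. Third, a standard argument (summing DBE over $i$) gives $\vpi(\vx)^T \mK[\vx] = \vpi(\vx)^T$, so $\vpi(\vx)$ is stationary.

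For uniqueness, I would invoke the standard finite-state Markov chain fact that an irreducible kernel admits a unique stationary distribution. Here $\mK[\vx]$ inherits the sparsity pattern of $\mP$: since $(x_k/\mu_k)^{-\alpha} > 0$ for all $k$ whenever $\vx \in \Int(\Sigma)$, we have $K_{ij}[\vx] > 0$ if and only if $P_{ij} > 0$. Because $\mP$ is ergodic on the connected graph $\cG$, $\mK[\vx]$ is likewise irreducible, and uniqueness of $\vpi(\vx)$ follows.

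There is no substantive obstacle; the entire proposition reduces to writing down the right ansatz and a one-line symmetry check. The only point that warrants care is the irreducibility argument for uniqueness (ensuring $\vx \in \Int(\Sigma)$ really is needed so that no transition probability vanishes) and the observation that the normalizing constant is well-defined and nonzero, both of which are immediate from $x_i,\mu_i > 0$.
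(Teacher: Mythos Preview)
Your proposal is correct and follows essentially the same approach as the paper: write $\pi_i(\vx)K_{ij}[\vx]$ explicitly, cancel the denominator $S_i(\vx)$, and reduce the detailed balance check to the reversibility $\mu_iP_{ij}=\mu_jP_{ji}$ of the base chain. You are in fact slightly more thorough, since the paper's proof only verifies the DBE and does not spell out the positivity/irreducibility argument for uniqueness that you include.
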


Since the transition probability $K_{ij}[\vx]$ for any node $i$ to $j$ is only well defined when $x_j > 0$ for each $j\in\cN(i)$, we redefine $\vx_n$ (originally defined in \eqref{eqn:empirical_dist_1}) as $\vx_n \triangleq \frac{1}{n+1}\left[ \vnu + \sum_{k=1}^n \bfdelta_{X_k}\right]$ for some $\vnu \in \Int(\Sigma)$. This redefined sequence of empirical measures $\{\vx_n\}_{n \geq 0}$, which satisfies the recursion
\begin{equation}\label{eqn:srrw iteration original}
    \vx_{n+1} = \vx_n + \frac{1}{n\!+\!2}\left( \bfdelta_{X_{n+1}} - \vx_n \right), ~~~\forall ~n\geq 0,
\end{equation}
starts from $\vx_0 = \vnu \in \Int(\Sigma)$, and satisfies $\vx_n \in \Int(\Sigma)$ which ensures that $\mK[\vx_n]$ is always well defined for every finite $n \geq 0$.

\begin{remark}\label{remark:initial point}
    The S.I. property (ii) along with the form of $K_{ij}[\vx]$ as in \eqref{eqn:general srrw kernel} means that knowing visit counts of each neighboring node of the current position is enough for computing the transition probabilities. This has benefits when the random walker is exploring the graph while performing MCMC sampling on-the-fly. For example, the random walker which associates one `fake' visit to every newly discovered neighboring node automatically ensures that the SRRW iteration begins with $\vnu \propto \ones$ (uniform distribution) without the need to know any \textit{global} graph statistic. This 'fake' visits can also be weighted - assigning it proportional to the degree of the newly discovered neighboring node ensures that $\vnu = \vd$, where $\vd=[\deg(i)/\sum_{k\in\cN} \deg(k)]_{i \in \cN}$ is the degree proportional distribution of the graph. It is also important to note that our main results in Sections \ref{section:stochastic_analysis} are invariant in the choice of $\vnu \in \Int(\Sigma)$. \qed
\end{remark}

The existence and uniqueness of the stationary distribution $\vpi(\vx_n)$ shown in Proposition \ref{prop:form of stationary dist} allows us to further decompose the recursion in \eqref{eqn:srrw iteration original} as
\begin{equation}\label{eqn:srrw iteration}
    \vx_{n+1} = \vx_n + \gamma_{n+1} \left[ \vh(\vx_n)  + \bfepsilon(X_{n+1},\vx_n) \right], 
\end{equation}
where $\gamma_n \triangleq \frac{1}{n+1}$, $\vh(\vx) \triangleq \vpi(\vx) - \vx$ and $\bfepsilon(X,\vx) \triangleq \bfdelta_{X} \!-\! \vpi(\vx)$. The recursion in \eqref{eqn:srrw iteration} is an example of a stochastic approximation (SA) algorithm with controlled Markovian dynamic \citep[see][]{benveniste2012adaptive, Andrieu2005Stability, andrieu2015stability, fort2014central} and decreasing step sizes $\{\gamma_n\}_{n \geq 0}$. For each $n\geq 0$, the random variable $\bfepsilon(X_{n+1},\vx_n)$ captures the \textit{noise} in the updates which, in this case, is driven by the stochastic process $\{X_n\}_{n \geq 0}$. SA algorithms have a strong connection to ordinary differential equations (ODEs) via its \textit{mean-field}, which in our case is $\vh:\Int(\Sigma) \to \R^N$; SA iterates typically converge to $\omega$-limit sets (fixed points and limit cycles) of the semi-flow induced by $\vh(\cdot)$ as long as the noise terms $\{\bfepsilon(X_n,\vx_n)\}_{n \geq 0}$ are shown to have negligible contribution to the SA iterations asymptotically \cite{kushner1997, Benaim99, borkar2008}. Studying the asymptotic properties of the \textit{deterministic} ODE system given by
\begin{equation}\label{eqn:main ode}
    \frac{d}{dt}\vx(t) = \vh(\vx(t)) = \vpi(\vx(t)) - \vx(t)
\end{equation}
are therefore vital to understanding those of the \textit{stochastic} iteration \eqref{eqn:srrw iteration}. For the SRRW kernel as in \eqref{eqn:general srrw kernel}, we have $\mK[\vmu] = \mP$ for any $\alpha \geq 0$ since $r_{\mu_i}(\mu_i) = (\mu_i/\mu_i)^{-\alpha} = 1$ for all $i\in \cN$; and from the balance equation $\vpi(\vmu)^T\mP = \vpi(\vmu)^T$ we have $\vpi(\vmu) = \vmu$. Thus, the target measure $\vmu$ solves $\vpi(\vx) = \vx$, the fixed point equation of \eqref{eqn:main ode}.

\section{Global convergence of the mean-field ODE} \label{section:ode_analysis}

In this section we analyze the asymptotic behaviour of the ODE, specifically the global asymptotic stability of the target distribution $\vmu\in\Int(\Sigma)$. The ODE system in \eqref{eqn:main ode} is positively invariant in $\Int(\Sigma)$, that is, solutions of the system starting in $\Int(\Sigma)$ stay in the same set for all times $t \geq 0$. Indeed, one can check that as $x_i(t) \to 0$ for any $i\in\cN$, the corresponding $\pi_i(\vx) \to 1$, and the $i$-th entry $\frac{dx_i}{dt} = \pi_i(\vx)-x_i$ of \eqref{eqn:main ode} will always be positive. With this, we first provide the following results concerning the fixed points of \eqref{eqn:main ode} in $\Int(\Sigma)$. The proofs of all results in this section are deferred to Appendix \ref{appendix:ode analysis}.

\begin{proposition}\label{prop:uniqueness}
    For any $\alpha \geq 0$, the target stationary measure $\vmu \in \Int(\Sigma)$ is the unique fixed point of ODE \eqref{eqn:main ode}.
\end{proposition}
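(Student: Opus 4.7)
The plan is to verify existence directly from the construction and then prove uniqueness via an extremum argument applied to the detailed balance relation induced by the nonlinear kernel. Existence is essentially already noted preceding the proposition: since $r_{\mu_i}(\mu_i)=1$ for all $i$, we have $\mK[\vmu]=\mP$, and hence $\vpi(\vmu)=\vmu$. The substantive step is therefore uniqueness.

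The first key step is to reduce the fixed-point identity $\vpi(\vx^*)=\vx^*$ to a clean algebraic condition on the ratios $y_i\triangleq x_i^*/\mu_i$. By Proposition \ref{prop:form of stationary dist}, the pair $(\mK[\vx^*],\vpi(\vx^*))$ solves the DBE, and therefore so does $(\mK[\vx^*],\vx^*)$. Substituting the explicit form of $K_{ij}[\vx^*]$ from \eqref{eqn:general srrw kernel} into $x_i^* K_{ij}[\vx^*]=x_j^* K_{ji}[\vx^*]$, clearing normalizing denominators, and using time-reversibility of the base chain ($\mu_i P_{ij}=\mu_j P_{ji}$) to cancel, I would obtain, for every edge $(i,j)\in\cE$,
\[
y_i^{1+\alpha}\, Z_j \;=\; y_j^{1+\alpha}\, Z_i, \qquad \text{where } Z_i \triangleq \sum_{k\in\cN} P_{ik}\, y_k^{-\alpha}.
\]
Connectivity of $\cG$ then forces $y_i^{1+\alpha}/Z_i$ to be a single constant $c>0$ across all $i\in\cN$.

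With this identity in hand, I would apply an extremum argument. Let $y_{\max}\triangleq \max_i y_i$ be achieved at $i^*$ and $y_{\min}\triangleq \min_i y_i$ at $j^*$. Since $\alpha\geq 0$, substituting the monotone bounds $y_{\min}^{-\alpha}\geq y_k^{-\alpha}\geq y_{\max}^{-\alpha}$ into $Z_{i^*}$ and $Z_{j^*}$, and using $\sum_k P_{ik}=1$, yields
\[
y_{\max}^{1+\alpha}\, y_{\min}^{\alpha} \;\leq\; c \;\leq\; y_{\min}^{1+\alpha}\, y_{\max}^{\alpha}.
\]
Dividing the outer inequalities forces $y_{\max}\leq y_{\min}$, so $\vy$ must be constant across $\cN$. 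The simplex normalization $\sum_i \mu_i y_i = 1$ then pins down $\vy = \ones$, i.e., $\vx^* = \vmu$.

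The main technical subtlety lies in the reduction of the first step: because $Z_i$ depends on the full vector $\vx^*$, the DBE appears genuinely nonlinear, but the balance-based cancellation collapses it to the constant-ratio identity, after which the extremum comparison is essentially one line and works uniformly in $\alpha\geq 0$. As an alternative that would dovetail nicely with proving global asymptotic stability of $\vmu$ later, one could instead identify fixed points of \eqref{eqn:main ode} with critical points on the simplex of the smooth function $L(\vx)\triangleq \sum_{i,k}\mu_i P_{ik}(x_i/\mu_i)^{-\alpha}(x_k/\mu_k)^{-\alpha}$ via Lagrange multipliers, and exploit strict convexity of $L$ on $\Int(\Sigma)$; either route yields uniqueness, but the extremum argument is more self-contained for the present proposition.
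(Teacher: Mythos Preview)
Your proposal is correct and follows essentially the same approach as the paper. Both arguments reduce the fixed-point condition to the identity $y_i^{1+\alpha}/Z_i=\text{const}$ (with $Z_i=\sum_k P_{ik}y_k^{-\alpha}$) and then conclude via the same extremum comparison; the only minor difference is that you obtain this identity edge-by-edge from the DBE and then invoke connectivity, whereas the paper reads it off directly from the closed form of $\vpi(\vx)$ in Proposition~\ref{prop:form of stationary dist} (so the normalizing constant $D$ plays the role of your $1/c$).
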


Our next step is to show convergence of ODE \eqref{eqn:main ode} to its unique fixed point $\vmu$, which will be done via a \textit{Lyapunov function}. For any $\alpha \geq 0$, consider the function $w:\Int(\Sigma) \to \R$ given by
\begin{equation}\label{eqn:lyapunov_function}
    w(\vx) = \sum \limits_{i\in\cK}\sum \limits_{j \in \cN} \mu_i P_{ij} \left(\frac{x_i}{\mu_i}\right)^{\!-\alpha} \!\left(\frac{x_j}{\mu_j}\right)^{\!-\alpha}
\end{equation}
for all $\vx \in \Int(\Sigma)$. The following result shows that $w(\cdot)$ is a \textit{strict} Lyapunov function for ODE \eqref{eqn:main ode}.

\begin{lemma}\label{lemma:strict_lyapunov}
    The function $w:\Int(\Sigma) \to \R$ satisfies $\nabla w(\vx)^T \vh(\vx) \leq 0$ for all $\vx \in \Int(\Sigma)$, with equality only when $\vx = \mu$. Moreover, $w(\vx) \!\to\! \infty$ as $x_i \!\to\! 0$ for any $i\!\in\!\cN$.
\end{lemma}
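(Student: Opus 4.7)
The plan hinges on recognizing that $w(\vx)$ is precisely the normalizing constant appearing in Proposition \ref{prop:form of stationary dist}: introducing the shorthand $\phi_i(\vx) \triangleq \sum_{j\in\cN}\mu_i P_{ij}(x_i/\mu_i)^{-\alpha}(x_j/\mu_j)^{-\alpha}$, one has $w(\vx)=\sum_{i\in\cN}\phi_i(\vx)$ and $\pi_i(\vx)=\phi_i(\vx)/w(\vx)$. This identification will turn the Lyapunov drift into a standard comparison between the two probability vectors $\vpi(\vx)$ and $\vx$ that can be handled by a Cauchy--Schwarz argument.

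The first step is to compute $\partial w/\partial x_k$. The DBE identity $\mu_i P_{ij}=\mu_j P_{ji}$ shows that each summand $\mu_i P_{ij}(x_i/\mu_i)^{-\alpha}(x_j/\mu_j)^{-\alpha}$ is symmetric under $i\leftrightarrow j$, so the variable $x_k$ appears in two equal groups of terms (those with $i=k$ and those with $j=k$). A routine differentiation then yields
\[
\frac{\partial w}{\partial x_k}(\vx) \;=\; -\frac{2\alpha}{x_k}\,\phi_k(\vx) \;=\; -\frac{2\alpha\, w(\vx)\,\pi_k(\vx)}{x_k}.
\]
Taking the inner product with $\vh(\vx)=\vpi(\vx)-\vx$ and using $\sum_k \pi_k(\vx)=1$ gives
\[
\nabla w(\vx)^T\vh(\vx) \;=\; -2\alpha\, w(\vx)\,\biggl(\sum_{k\in\cN}\frac{\pi_k(\vx)^2}{x_k}-1\biggr).
\]

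The bracket is nonnegative by Cauchy--Schwarz: $1=(\sum_k \pi_k(\vx))^2 \leq \bigl(\sum_k \pi_k(\vx)^2/x_k\bigr)\bigl(\sum_k x_k\bigr)$, so $\nabla w(\vx)^T\vh(\vx)\leq 0$. Equality forces $\pi_k(\vx)/x_k$ to be constant in $k$; since both vectors lie in $\Sigma$, this collapses to $\vpi(\vx)=\vx$, and Proposition \ref{prop:uniqueness} identifies this fixed point as $\vx=\vmu$. For the divergence property, as $x_i\to 0$ pick any $j\in\cN$ with $P_{ij}>0$ and use the trivial lower bound $(x_j/\mu_j)^{-\alpha}\geq \mu_j^\alpha$ (from $x_j\leq 1$) together with the symmetrized $(i,j)$-pair to get $w(\vx)\geq 2\mu_i P_{ij}\,\mu_j^\alpha\,(x_i/\mu_i)^{-\alpha}\to\infty$. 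The only technically delicate step is the derivative bookkeeping --- correctly collecting the $i=k$ and $j=k$ contributions via detailed balance to produce the clean identity for $\partial w/\partial x_k$; once that is in hand, the remainder of the argument is immediate.
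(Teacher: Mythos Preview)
Your proof is correct and follows essentially the same route as the paper: both compute $\partial w/\partial x_k = -2\alpha\,\phi_k(\vx)/x_k$ via detailed balance and reduce the drift to $-2\alpha\,w(\vx)\bigl(\sum_k \pi_k(\vx)^2/x_k - 1\bigr)$, with the paper phrasing the nonnegativity of the bracket as $\mathrm{Var}[Z]\geq 0$ for the random variable $Z_k = w(\vx)\pi_k(\vx)/x_k$ drawn with probability $x_k$, which is exactly your Cauchy--Schwarz inequality. Your argument is in fact slightly more complete, since you also supply the boundary divergence $w(\vx)\to\infty$ as $x_i\to 0$ (for $\alpha>0$), which the paper's proof leaves implicit.
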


The Lyapunov result in the form as stated above will be especially vital in Section \ref{section:stochastic_analysis} for proving the first and second order convergence results for the \textit{stochastic} SRRW iteration \eqref{eqn:srrw iteration original}. Our next result, concerning the global convergence of trajectories of \eqref{eqn:main ode}, follows by application of the LaSalle invariance principle \citep[see][Chapter 5]{Vidyasagar} to Lemma \ref{lemma:strict_lyapunov}.

\begin{theorem}\label{thm:global_convergence_ode}
    For any $\alpha \geq 0$, the unique fixed point $\mu$ of ODE \eqref{eqn:main ode} is globally asymptotically stable in $\Int(\Sigma)$; that is, for any initial $\vx(0)\in\Int(\Sigma)$, we have $\vx(t) \to \mu$ as $t \to \infty$.
\end{theorem}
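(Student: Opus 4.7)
The plan is to deduce global asymptotic stability by invoking the LaSalle invariance principle, with $w$ from Lemma~\ref{lemma:strict_lyapunov} playing the role of the Lyapunov function and Proposition~\ref{prop:uniqueness} supplying the required fixed-point uniqueness.

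First, I would verify that every trajectory starting at $\vx(0) \in \Int(\Sigma)$ is well-defined for all $t \geq 0$ and is contained in a fixed compact subset of $\Int(\Sigma)$. Positive invariance of the open simplex follows from two observations: the mass-preservation identity $\sum_{i\in\cN} \pi_i(\vx) = \sum_{i\in\cN} x_i = 1$ yields $\sum_i \dot{x}_i(t) = 0$, and at any face where $x_i = 0$ one has $\pi_i(\vx) > 0$ and hence $\dot{x}_i > 0$, as already noted in the excerpt. To exclude trajectories that merely asymptote to $\partial \Sigma$, I would invoke the coercivity statement $w(\vx) \to \infty$ as $x_i \to 0$ from Lemma~\ref{lemma:strict_lyapunov}: since $\nabla w(\vx)^T \vh(\vx) \leq 0$ along the flow, $w(\vx(t))$ is non-increasing, so $\vx(t)$ remains in the sublevel set $K \triangleq \{\vx \in \Int(\Sigma) : w(\vx) \leq w(\vx(0))\}$. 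Coercivity makes $K$ compact in $\Int(\Sigma)$, which simultaneously yields global existence in forward time and supplies the compact positively invariant set required by LaSalle.

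Next, I would apply LaSalle's invariance principle on $K$. Because $w$ is continuous on the compact set $K$ and non-increasing along orbits, every $\omega$-limit point of the trajectory lies in the largest invariant subset of $E \triangleq \{\vx \in K : \nabla w(\vx)^T \vh(\vx) = 0\}$. The strict inequality in Lemma~\ref{lemma:strict_lyapunov} forces $E = \{\vmu\}$, and Proposition~\ref{prop:uniqueness} identifies $\vmu$ as the unique fixed point (hence the only invariant element of $E$). Therefore the $\omega$-limit set of every orbit equals $\{\vmu\}$, which is precisely the statement that $\vx(t) \to \vmu$ as $t \to \infty$ for every initial condition in $\Int(\Sigma)$.

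The only genuine obstacle is the behavior near $\partial\Sigma$: a priori the flow on the relatively open set $\Int(\Sigma)$ need not admit a compact positively invariant set, and without one LaSalle does not directly apply. This is precisely the role played by the coercivity clause of Lemma~\ref{lemma:strict_lyapunov}; once that is in hand, the remainder of the argument is a routine invocation of the LaSalle--Lyapunov machinery, and no separate linearization analysis near $\vmu$ is needed because the strict-decrease property on all of $\Int(\Sigma)\setminus\{\vmu\}$ directly rules out any other $\omega$-limit behavior.
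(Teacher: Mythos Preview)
Your proposal is correct and follows essentially the same route as the paper: both invoke LaSalle's invariance principle with $w$ from Lemma~\ref{lemma:strict_lyapunov} as the Lyapunov function and Proposition~\ref{prop:uniqueness} for uniqueness of the fixed point. If anything, your treatment is more careful than the paper's, which simply asserts relative compactness from positive invariance of the simplex; you explicitly use the coercivity of $w$ to confine trajectories to a compact sublevel set inside $\Int(\Sigma)$, which is the cleaner way to rule out approach to the boundary.
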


For each $\alpha \geq 0$, let $\mJ(\alpha) \triangleq [J_{ij}(\alpha)]_{i,j\in\cN}$ denote the Jacobian of $\vh(\vx)$ evaluated at $\vx = \vmu$, that is, $J_{ij}(\alpha) \triangleq \frac{\partial h_i(\vx)}{\partial x_j} \Big \vert_{\vx = \vmu}$. We end this section with the following result regarding the form of $\mJ(\alpha)$ and its spectrum in terms of $\alpha$, $\mP$, and the spectrum of $\mP$. This result will be employed to characterize the performance of SRRW in the next section. 

\begin{lemma}\label{lemma:srrw_jacobian}
    For any $\alpha \geq 0$, we have
    \begin{equation}\label{eqn:srrw_jacobian}
        \mJ(\alpha) = 2\alpha \vmu \ones^T - \alpha \mP^T - (\alpha \!+\! 1)\eye.
    \end{equation}
    Furthermore, let $\zeta_i$ denote the $i$-th eigenvalue of $\mJ(\alpha)$. Then, $\zeta_N = -1$, and $\zeta_i = \alpha(-\!1 \!-\!\lambda_{i}) - 1$ for all $i\in \{1,\cdots,N\!-\!1\}$, with $\vu_i$ ($\vv_i$) of $\mP$ now being the right (left) eigenvectors corresponding to $\zeta_i$ for all $i \in \cN$.
\end{lemma}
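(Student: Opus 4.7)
The plan is in two steps: first, derive the matrix form of $\mJ(\alpha)$ directly from the closed form of $\vpi(\vx)$ given in Proposition \ref{prop:form of stationary dist}; second, diagonalize the resulting matrix by exploiting the spectral structure of the reversible kernel $\mP$.

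For the matrix form, I would write $\pi_i(\vx) = f_i(\vx)/Z(\vx)$ where $f_i(\vx) \triangleq \mu_i (x_i/\mu_i)^{-\alpha}\sum_{j\in\cN} P_{ij}(x_j/\mu_j)^{-\alpha}$ and $Z(\vx) \triangleq \sum_{i\in\cN} f_i(\vx)$ is the normalizing constant. At $\vx=\vmu$ each ratio $(x_i/\mu_i)^{-\alpha}$ equals $1$, so row-stochasticity of $\mP$ gives $f_i(\vmu)=\mu_i$ and $Z(\vmu)=1$. The product rule yields two contributions to $\partial f_i/\partial x_k$ — one from the outer factor (nonzero only when $k=i$) and one from the derivative of $(x_j/\mu_j)^{-\alpha}$ inside the sum (nonzero only when $k=j$) — which at $\vx=\vmu$ simplify to $-\alpha\,\delta_{ik} - \alpha\,\mu_i P_{ik}/\mu_k$. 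Detailed balance $\mu_i P_{ik} = \mu_k P_{ki}$ then rewrites the second term as $-\alpha P_{ki}$, and summing over $i$ gives $\partial Z/\partial x_k\big|_{\vmu} = -2\alpha$. Applying the quotient rule produces $\partial \pi_i/\partial x_k\big|_{\vmu} = -\alpha\,\delta_{ik} - \alpha P_{ki} + 2\alpha\mu_i$. Subtracting the identity (since $\vh = \vpi - \vx$) delivers exactly $\mJ(\alpha) = 2\alpha\vmu\ones^T - \alpha\mP^T - (\alpha+1)\eye$.

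For the spectrum, the key observation is that $\vu_i^T\mP = \lambda_i \vu_i^T$ is equivalent to $\mP^T \vu_i = \lambda_i \vu_i$, so each $\vu_i$ is a right eigenvector of $-\alpha\mP^T-(\alpha+1)\eye$ with eigenvalue $-\alpha\lambda_i-(\alpha+1)$. The rank-one correction $2\alpha\vmu\ones^T$ acts on $\vu_i$ via the scalar $\ones^T\vu_i$. Using $\vu_i=\mD_{\vmu}\vv_i$ gives $\ones^T\vu_i = \vmu^T\vv_i$, and the biorthogonality of left/right eigenvectors inherent to reversibility (obtainable by symmetrizing $\mP$ as $\mD_{\vmu}^{1/2}\mP\mD_{\vmu}^{-1/2}$) gives $\vmu^T\vv_i=0$ for $i\neq N$. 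Hence for $i<N$ the rank-one term contributes nothing and $\zeta_i = -\alpha\lambda_i-(\alpha+1) = \alpha(-1-\lambda_i)-1$, with right eigenvector $\vu_i$. For $i=N$, direct substitution gives $\mJ(\alpha)\vmu = 2\alpha\vmu-\alpha\vmu-(\alpha+1)\vmu = -\vmu$, so $\zeta_N=-1$. A parallel computation — using $\ones^T\mP=\ones^T$ and the same biorthogonality to kill $\vv_i^T\vmu$ for $i<N$ — confirms that each $\vv_i$ is a left eigenvector of $\mJ(\alpha)$ with the same eigenvalue $\zeta_i$.

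The computation is mostly bookkeeping; the only genuinely nontrivial ingredient is the biorthogonality $\ones^T\vu_i=0$ for $i<N$, which is precisely where reversibility enters — without it the rank-one perturbation $2\alpha\vmu\ones^T$ would couple the eigenvectors of $\mP^T$ nontrivially and destroy the clean diagonalization. A minor care point is that when $\mP$ has repeated eigenvalues one must fix a specific basis of left/right eigenvectors satisfying the biorthogonality, which is always feasible for reversible chains thanks to the symmetric conjugate $\mD_{\vmu}^{1/2}\mP\mD_{\vmu}^{-1/2}$.
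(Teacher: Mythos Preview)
Your proposal is correct and follows essentially the same approach as the paper: writing $\pi_i=f_i/Z$, evaluating $f_i$, $Z$, and their partials at $\vx=\vmu$ via the quotient rule (using detailed balance to convert $\mu_i P_{ik}/\mu_k$ into $P_{ki}$), and then diagonalizing by applying $\mJ(\alpha)$ to $\vu_i$ and invoking $\ones^T\vu_i=\vv_N^T\vu_i=0$ for $i<N$. Your presentation is slightly more streamlined (handling the diagonal and off-diagonal cases of $\partial f_i/\partial x_k$ in one formula) and your remarks on where reversibility is genuinely used and on the repeated-eigenvalue case are apt additions, but the underlying argument is the same.
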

\section{Main results - Convergence and co-variance ordering of SRRW} \label{section:stochastic_analysis}

In this section, we provide our main results concerning the convergence properties of the SRRW iterate sequence $\{\vx_n\}_{n \geq 0}$ for all $\alpha \geq 0$. First, we prove that starting from any initial $\vx_0 \in \Int(\Sigma)$, the SRRW iterate sequence $\{\vx_n\}_{n \geq 0}$ satisfying \eqref{eqn:srrw iteration} converges almost surely to the target distribution $\vmu$, and then provide a CLT where we characterize the exact form of the arising asymptotic co-variance matrix $\mV(\alpha)$ as a function of $\alpha \geq 0$. We use this to prove our performance ordering as a Loewner ordering of asymptotic co-variance matrices, in the form of \eqref{eqn:covariance_ordering_intro}, as touched upon previously in Section \ref{section:introduction}. We also provide corollaries accompanying our main result for the special case of MCMC sampling, showing that the asymptotic sampling variance decreases in $\alpha\geq 0$ with speed $O(1/\alpha)$. All the following results are proved under the assumption \ref{assu:boundedness} as shown below, which we elaborate later in Remark \ref{remark:boundedness}.

\begin{assumption} \label{assu:boundedness}
    For any $(\vx_0,X_0)\in \Int(\Sigma)\times \cN$, the iterate sequence $\{\vx_n\}_{n \geq 0}$ is $\prob_{X_0,\vx_0}$ - almost surely contained within a compact subset of $\Int(\Sigma)$.
\end{assumption}

\begin{theorem}[Almost sure convergence to target $\vmu$]\label{thm:first order result} Under \ref{assu:boundedness}, $\vx_n$ converges $\prob_{\vx_0,X_0}$-almost surely to $\vmu$ as $n\to\infty$ for any initial $(\vx_0,X_0) \in \Int(\Sigma) \times \cN$ and any $\alpha \geq 0$.
\end{theorem}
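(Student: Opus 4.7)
The plan is to view \eqref{eqn:srrw iteration} as a stochastic approximation (SA) with controlled Markov noise driven by the family $\{\mK[\vx]\}$, show that the cumulative noise $\sum_k \gamma_{k+1}\bfepsilon(X_{k+1},\vx_k)$ converges almost surely, and then invoke the ODE method to transfer the global asymptotic stability of $\vmu$ from Theorem~\ref{thm:global_convergence_ode} to the iterates $\{\vx_n\}$ themselves.

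Assumption~\ref{assu:boundedness} confines the trajectory to some (random) compact $\cK \subset \Int(\Sigma)$. On $\cK$, every entry $K_{ij}[\vx]$ in \eqref{eqn:general srrw kernel} is smooth in $\vx$, and the entries with $a_{ij}>0$ are uniformly bounded below; hence $\{\mK[\vx]\}_{\vx\in\cK}$ is a family of irreducible finite-state kernels with a uniform spectral gap. A standard fundamental-matrix argument then produces a Poisson solution $\hat{\bfepsilon}(\cdot,\vx)\in\R^N$ to
\begin{equation*}
    \hat{\bfepsilon}(i,\vx) - \sum_{j\in\cN}K_{ij}[\vx]\,\hat{\bfepsilon}(j,\vx) = \bfepsilon(i,\vx), \qquad i\in\cN,
\end{equation*}
that is uniformly bounded and Lipschitz in $\vx\in\cK$. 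Setting $\psi(i,\vx):=\sum_{j\in\cN} K_{ij}[\vx]\hat{\bfepsilon}(j,\vx)$, I will decompose
\begin{equation*}
    \bfepsilon(X_{n+1},\vx_n) \;=\; \underbrace{\hat{\bfepsilon}(X_{n+1},\vx_n) - \psi(X_n,\vx_n)}_{\Delta M_{n+1}} \;+\; \underbrace{\psi(X_n,\vx_n)-\psi(X_{n+1},\vx_n)}_{T_{n+1}},
\end{equation*}
where $\E[\Delta M_{n+1}\mid\cF_n]=\0$ because, conditionally on $\cF_n$, $X_{n+1}$ has law $\mK[\vx_n](X_n,\cdot)$. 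Then $\sum_k\gamma_{k+1}\Delta M_{k+1}$ has quadratic variation controlled by $\sum_k\gamma_{k+1}^2<\infty$ and $\sup_\cK\|\hat{\bfepsilon}\|<\infty$, so converges a.s. by $L^2$-martingale convergence; for the $T$-sum, Abel summation leaves a vanishing boundary term, a contribution $\sum_k(\gamma_{k+1}-\gamma_k)\psi(X_k,\vx_k)$ of absolute size $\sum_k O(k^{-2})$, and a contribution $\sum_k \gamma_k[\psi(X_k,\vx_k)-\psi(X_k,\vx_{k-1})]$ of absolute size $\sum_k O(\gamma_k^2)$, using Lipschitz continuity of $\vx\mapsto\psi(X_k,\vx)$ together with $\|\vx_k-\vx_{k-1}\|=O(\gamma_k)$; both are absolutely convergent.

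With the cumulative noise thus summable almost surely, standard SA results \citep{Benaim99,borkar2008} imply that $\{\vx_n\}$ is an asymptotic pseudo-trajectory of the mean-field ODE~\eqref{eqn:main ode}, whose $\omega$-limit set is therefore an internally chain-transitive invariant set of the ODE contained in $\cK\subset\Int(\Sigma)$. By Theorem~\ref{thm:global_convergence_ode}, together with the strict Lyapunov function from Lemma~\ref{lemma:strict_lyapunov}, the only such set inside $\Int(\Sigma)$ is $\{\vmu\}$, which gives $\vx_n\to\vmu$ $\prob_{\vx_0,X_0}$-almost surely. I expect the principal obstacle to be step two — establishing the uniform spectral gap for $\{\mK[\vx]\}_{\vx\in\cK}$ and the Lipschitz-in-$\vx$ regularity of the resulting Poisson solution — since this regularity is exactly what lets us turn the state-dependent Markov noise into a martingale plus an absolutely summable remainder; once that is in hand, the remaining SA-with-decreasing-stepsizes machinery is routine.
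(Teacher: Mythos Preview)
Your proposal is correct and takes essentially the same approach as the paper: both establish Poisson-equation regularity on compacta (the paper via $\mQ[\vx]=\int_0^\infty(e^{t(\mK[\vx]-\eye)}-\ones\vpi(\vx)^T)\,dt$ and verification of condition~(MS) in \cite{delyon2000stochastic}, you via the explicit martingale--remainder decomposition) and then transfer the ODE stability of Theorem~\ref{thm:global_convergence_ode} and Lemma~\ref{lemma:strict_lyapunov} to the iterates via an off-the-shelf SA result. The only difference is packaging---the paper verifies (A), (B), (MS) and cites Theorem~15 of \cite{delyon2000stochastic} directly, whereas you carry out the noise-summability argument by hand before invoking \cite{Benaim99,borkar2008}; the regularity checkpoint you flag as the main obstacle is exactly the content of the paper's verification of~(MS).
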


\begin{theorem}[Central Limit Theorem]\label{thm:second order result}
Under \ref{assu:boundedness},
\begin{equation} \label{eqn:CLT of iterates}
    \sqrt{n}(\vx_n - \vmu) \xrightarrow[dist.]{n\to \infty} N(\0,\mV(\alpha)),
\end{equation}
for any $\alpha \geq 0$, where $\mV(\alpha) \in \R^{N \times N}$ is given by
\begin{equation} \label{eqn:covariance matrix}
    \mV(\alpha) = \sum_{i=1}^{N-1} \frac{1}{2\alpha (1 + \lambda_i) + 1} \cdot \frac{1+\lambda_i}{1-\lambda_i}\vu_i\vu_i^T.
\end{equation}

\end{theorem}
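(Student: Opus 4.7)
The plan is to apply a CLT for stochastic approximation with controlled Markovian noise (following \cite{fort2015central}) to the recursion \eqref{eqn:srrw iteration}, and then to solve the resulting continuous-time Lyapunov equation in closed form using the shared eigenstructure of $\mJ(\alpha)$ and the frozen-chain noise covariance. For the step size $\gamma_n = 1/(n+1)$, this SA-CLT yields $\sqrt{n}(\vx_n - \vmu) \Rightarrow N(\0, \mV(\alpha))$ where $\mV(\alpha)$ is the unique solution of
\begin{equation*}
\bigl(\mJ(\alpha) + \tfrac12\eye\bigr)\mV(\alpha) + \mV(\alpha)\bigl(\mJ(\alpha) + \tfrac12\eye\bigr)^{\!T} = -\mU,
\end{equation*}
with $\mU$ the long-run covariance of $\bfdelta_{X_n} - \vmu$ under the frozen kernel $\mK[\vmu] = \mP$.

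First I would verify the hypotheses of this SA-CLT using earlier results. Theorem \ref{thm:first order result} already delivers $\vx_n \to \vmu$ almost surely while keeping the iterates in a compact subset of $\Int(\Sigma)$ under \ref{assu:boundedness}. The mean field $\vh(\vx) = \vpi(\vx) - \vx$ is $C^1$ near $\vmu$ by the closed form in Proposition \ref{prop:form of stationary dist}. Lemma \ref{lemma:srrw_jacobian} shows that $\mJ(\alpha)$ is diagonalizable with real eigenvalues all at most $-1$, so $\mJ(\alpha) + \tfrac12\eye$ is Hurwitz --- exactly the stability condition the $\gamma_n = 1/(n+1)$ regime demands. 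For the Markovian noise, I would solve the parameterized Poisson equation $(\eye - \mK[\vx])\vf(\vx,\cdot) = \bfdelta_{\cdot} - \vpi(\vx)$, using the uniform spectral gap that $\mK[\vx]$ inherits from $\mP$ on compact subsets of $\Int(\Sigma)$; this yields uniformly bounded and Lipschitz-in-$\vx$ Poisson solutions, and hence the standard decomposition of $\bfepsilon(X_{n+1}, \vx_n)$ into a martingale difference plus asymptotically negligible remainders.

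Next I would compute $\mU$ via the spectral CLT for reversible chains. Expanding $\bfdelta_j - \vmu$ in the $\vmu$-orthonormal eigenbasis $\{\vv_i\}$ of $\mP$ (so $\vu_i = \mD_{\vmu}\vv_i$) and summing the geometric series $1 + 2\sum_{k\geq 1}\lambda_i^k = (1+\lambda_i)/(1-\lambda_i)$ gives
\begin{equation*}
\mU = \sum_{i=1}^{N-1} \frac{1+\lambda_i}{1-\lambda_i}\,\vu_i\vu_i^T.
\end{equation*}
To solve the Lyapunov equation, I would substitute the ansatz $\mV(\alpha) = \sum_{i<N} c_i\,\vu_i\vu_i^T$. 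By Lemma \ref{lemma:srrw_jacobian}, $\mJ(\alpha)\vu_i = \zeta_i\vu_i$ with $\zeta_i = -\alpha(1+\lambda_i) - 1$ for $i<N$, so matching coefficients immediately gives $2c_i(\zeta_i + \tfrac12) = -(1+\lambda_i)/(1-\lambda_i)$, i.e., $c_i = (2\alpha(1+\lambda_i)+1)^{-1}(1+\lambda_i)/(1-\lambda_i)$, which is exactly \eqref{eqn:covariance matrix}. Uniqueness follows because $\mJ(\alpha) + \tfrac12\eye$ is Hurwitz, and consistency with the simplex constraint $\ones^T\vx_n = 1$ is encoded in $\vu_i^T\ones = 0$ for $i<N$, yielding $\mV(\alpha)\ones = \0$.

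The main technical obstacle will be the Poisson-equation bookkeeping --- establishing uniform-in-$\vx$ bounds and Lipschitz regularity of $\vf(\vx,\cdot)$ on compact subsets of $\Int(\Sigma)$ so that the controlled-Markov noise decomposition required by the SA-CLT goes through rigorously. This is precisely where \ref{assu:boundedness} earns its keep: confinement to a compact subset of $\Int(\Sigma)$ gives a uniform spectral gap for $\mK[\vx_n]$ along trajectories, after which the spectral-series computation of $\mU$ and the diagonalization of the Lyapunov equation are mechanical consequences of the eigenstructure exposed in Lemma \ref{lemma:srrw_jacobian}.
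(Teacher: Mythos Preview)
Your proposal is correct and follows essentially the same route as the paper: verify the hypotheses of an SA-CLT for controlled Markov noise (the paper uses \cite{delyon2000stochastic} rather than \cite{fort2015central}, but the checklist is the same --- Poisson-equation regularity on compacta, step-size conditions, Hurwitz Jacobian via Lemma~\ref{lemma:srrw_jacobian}), then exploit the shared eigenstructure of $\mJ(\alpha)$ and $\mU$ to obtain the closed form of $\mV(\alpha)$. The only cosmetic difference is that the paper computes $\mV(\alpha)$ via the integral representation $\int_0^\infty e^{t(\mJ(\alpha)+\eye/2)}\mU\, e^{t(\mJ(\alpha)+\eye/2)^T}\,dt$ and evaluates it term-by-term using the spectral expansion of the matrix exponential, whereas you solve the equivalent Lyapunov equation directly by ansatz --- both arrive at \eqref{eqn:covariance matrix} from the same ingredients.
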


The explicit form of $\mV(\alpha)$ in Theorem \ref{thm:second order result} is derived by applying Lemma \ref{lemma:srrw_jacobian} in conjunction with a version of \citep[Lemma 6.3.7]{bremaudmarkov2020} modified for our SRRW case. This allows us to fully characterize the co-variance matrix $\mV(\alpha)$, and we have the following performance ordering result as a corollary.

\begin{corollary}[Ordering of asymptotic covariance]\label{cor:avr ordering}
For any $\alpha_1 > \alpha_2 > 0$, we have
$$\mV(\alpha_1) <_L \mV(\alpha_2) <_L \mV(0).$$
\end{corollary}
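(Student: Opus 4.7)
The plan is to argue directly from the closed-form expression for $\mV(\alpha)$ in Theorem~\ref{thm:second order result}. Rewriting
\begin{equation*}
    \mV(\alpha) \;=\; \sum_{i=1}^{N-1} c_i(\alpha)\, \vu_i \vu_i^T, \qquad c_i(\alpha) \;\triangleq\; \frac{1}{2\alpha(1+\lambda_i)+1}\cdot\frac{1+\lambda_i}{1-\lambda_i},
\end{equation*}
the ordering question reduces to two simple facts: every outer product $\vu_i \vu_i^T$ is positive semi-definite, and every scalar $c_i(\alpha)$ is monotone in $\alpha$. There is no need to revisit the stochastic analysis behind Theorem~\ref{thm:second order result}; the argument is purely algebraic.

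For the monotonicity, for each $i < N$ we have $\lambda_i \in [-1,1)$, so $1-\lambda_i > 0$ and the factor $\frac{1+\lambda_i}{1-\lambda_i}$ is nonnegative and independent of $\alpha$. The factor $\frac{1}{2\alpha(1+\lambda_i)+1}$ is strictly decreasing in $\alpha$ whenever $\lambda_i > -1$; in the degenerate case $\lambda_i = -1$ the whole coefficient $c_i(\alpha)$ vanishes identically and contributes nothing to $\mV(\alpha)$. Consequently, for $\alpha_1 > \alpha_2 \geq 0$,
\begin{equation*}
    \mV(\alpha_2) - \mV(\alpha_1) \;=\; \sum_{i:\, i<N,\, \lambda_i > -1} \bigl[c_i(\alpha_2) - c_i(\alpha_1)\bigr]\, \vu_i \vu_i^T,
\end{equation*}
which is a nonnegative combination of rank-one PSD matrices, hence positive semi-definite; specializing $\alpha_2 = 0$ gives $\mV(\alpha_2) \leq_L \mV(0)$ by the same reasoning.

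To upgrade $\leq_L$ to the strict Loewner ordering stated in the corollary, I would invoke the relation $\vu_i = \mD_{\vmu}\vv_i$ recorded in Section~\ref{subsection: basic notations} together with diagonalizability of the reversible kernel $\mP$ to conclude that $\{\vu_i\}_{i=1}^{N-1}$ forms a basis of the hyperplane $\cH \triangleq \{\vz \in \R^N : \ones^T \vz = 0\}$. Then for every nonzero $\vz \in \cH$ at least one inner product $\vu_i^T \vz$ is nonzero, and expanding
\begin{equation*}
    \vz^T\bigl(\mV(\alpha_2) - \mV(\alpha_1)\bigr)\vz \;=\; \sum_{i<N,\, \lambda_i>-1} \bigl[c_i(\alpha_2)-c_i(\alpha_1)\bigr]\,(\vu_i^T \vz)^2 \;>\; 0
\end{equation*}
gives strict positivity on $\cH$.

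The only conceptual wrinkle, and the place where I expect the formal statement to require some care, is the interpretation of the strict symbol $<_L$: since $\mV(\alpha)\ones = \0$ for every $\alpha$ (the CLT limit in Theorem~\ref{thm:second order result} is supported on $\cH$ because $\vx_n,\vmu \in \Sigma$), the difference can never be positive definite on all of $\R^N$, so strict $<_L$ here must be understood relative to the natural supporting hyperplane $\cH$. Once this convention is adopted, the remaining work is the clean monotonicity calculation above, and no further probabilistic input is required.
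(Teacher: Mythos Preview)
Your approach is essentially identical to the paper's: both simply expand $\vx^T\mV(\alpha)\vx$ via the spectral formula from Theorem~\ref{thm:second order result} and observe that each coefficient $c_i(\alpha)$ is strictly decreasing in $\alpha$ because $\lambda_i\in(-1,1)$. You are in fact more careful than the paper about the strictness issue---the paper asserts $\vx^T\mV(\alpha)\vx<\vx^T\mV(0)\vx$ for \emph{all} $\vx\in\R^N$ without commenting on the direction $\vx=\ones$, whereas your remark that $<_L$ must be interpreted on the hyperplane $\cH=\{\ones^T\vz=0\}$ is exactly the right caveat.
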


Corollary \ref{cor:avr ordering} states the asymptotic covariance matrix decrease monotonically (in terms of Loewner ordering) as $\alpha$ increases. Recall that the case $\alpha = 0$ coincides with a purely Markovian random walker with kernel $\mP$, traversing the state space with no self interaction. Thus, the empirical distribution of any SRRW with $\alpha>0$ approximates the target distribution with asymptotically smaller co-variance than the underlying base Markov chain $\mP$. The CLT result in \eqref{eqn:CLT of iterates} also leads to the following corollary on the convergence of $L^p$ norms of the scaled error term $\vx_n-\vmu$.

\begin{corollary}\label{corollary:lp_norm}
    For any $\alpha \geq 0$ and integer $p \in \N$, there exists a constant $C_{p,\alpha} > 0$ such that
    \begin{equation}
        \sqrt{n} \E \left[ \| \vx_n - \mu \|_p \right] \xrightarrow{n\to \infty} C_{p,\alpha}.
    \end{equation}
\end{corollary}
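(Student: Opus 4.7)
My plan is to bootstrap the central limit theorem of Theorem~\ref{thm:second order result} into convergence of the first moment $\E[\|\vx_n - \vmu\|_p]$ via a uniform integrability argument. Writing $Y_n \triangleq \sqrt{n}(\vx_n - \vmu)$ and $Z \sim N(\0, \mV(\alpha))$, Theorem~\ref{thm:second order result} gives $Y_n \xrightarrow{d} Z$, so the continuous mapping theorem applied to $\vv \mapsto \|\vv\|_p$ yields $\|Y_n\|_p \xrightarrow{d} \|Z\|_p$. The natural candidate for the limit is
\[
    C_{p,\alpha} \triangleq \E[\|Z\|_p],
\]
which is finite (Gaussians have all moments) and strictly positive: from the spectral expansion \eqref{eqn:covariance matrix}, combined with the orthogonality relation $\vu_i^T \ones = \vu_i^T \vv_N = 0$ for $i < N$ recalled in Section~\ref{subsection: basic notations}, one has $\mV(\alpha)\ones = \0$ while $\mV(\alpha)$ is non-degenerate on the complementary hyperplane $\{\vv : \ones^T\vv = 0\}$, so $\|Z\|_p > 0$ almost surely.

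The real work is upgrading $\|Y_n\|_p \xrightarrow{d} \|Z\|_p$ to $\E[\|Y_n\|_p] \to \E[\|Z\|_p]$, for which it suffices (by Vitali's convergence theorem) to establish uniform integrability of $\{\|Y_n\|_p\}_{n \geq 0}$. By the de la Vall\'ee Poussin criterion, it is enough to exhibit some $q > 1$ with $\sup_n \E[\|Y_n\|_p^q] < \infty$, i.e., $\sup_n n^{q/2} \E[\|\vx_n - \vmu\|_p^q] < \infty$. Here Assumption~\ref{assu:boundedness} is the key ingredient: it confines $\{\vx_n\}$ almost surely to a fixed compact $\cK \subset \Int(\Sigma)$, and on $\cK$ the drift $\vh$ is Lipschitz, the noise $\bfepsilon(X_{n+1},\vx_n) = \bfdelta_{X_{n+1}} - \vpi(\vx_n)$ is uniformly bounded, and Lemma~\ref{lemma:srrw_jacobian} guarantees that $\mJ(\alpha)$ is Hurwitz (all eigenvalues $\zeta_i \leq -1$). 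These are precisely the hypotheses needed to invoke standard $L^{q}$ moment bounds for Robbins--Monro-type SA with controlled Markovian dynamics and step size $\gamma_n = 1/(n+1)$ (see e.g.~\citep{benveniste2012adaptive, fort2014central}), which yield $\E[\|\vx_n - \vmu\|^{2k}] = O(n^{-k})$ for every integer $k \geq 1$; taking $q$ even with $q \geq 2$ delivers the bound.

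Combining the two pieces, Vitali's theorem yields
\[
    \sqrt{n}\,\E[\|\vx_n - \vmu\|_p] \;=\; \E[\|Y_n\|_p] \;\xrightarrow{n \to \infty}\; \E[\|Z\|_p] \;=\; C_{p,\alpha},
\]
which is the claim. I expect the main obstacle to be the polynomial-rate moment bound $\E[\|\vx_n - \vmu\|^{2k}] = O(n^{-k})$. Assumption~\ref{assu:boundedness} already removes the hardest difficulty---trajectories escaping toward $\partial \Sigma$ where the nonlinear kernel $\mK[\cdot]$ blows up---but converting compact containment into the explicit polynomial decay requires either a black-box SA moment estimate or a hands-on Lyapunov argument (for instance on $V_k(\vx) = \|\vx - \vmu\|^{2k}$) coupled with a Poisson-equation decomposition of $\bfepsilon(X_{n+1}, \vx_n)$ to handle the Markovian noise. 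In either route, the Hurwitz property of $\mJ(\alpha)$ from Lemma~\ref{lemma:srrw_jacobian} supplies the contraction responsible for the $n^{-k}$ rate.
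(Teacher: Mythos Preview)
Your proposal is correct and in fact more complete than the paper's own argument. The paper's proof is a one-liner: noting that $\|\cdot\|_p$ is continuous and bounded on $\Sigma$, it appeals directly to the continuous mapping theorem applied to the CLT of Theorem~\ref{thm:second order result}. You take the same first step---continuous mapping yields $\|Y_n\|_p \xrightarrow{d} \|Z\|_p$---but then correctly observe that convergence in distribution does not by itself give convergence of expectations, and you supply the missing uniform integrability via SA moment bounds. Since $\|Y_n\|_p = \sqrt{n}\,\|\vx_n - \vmu\|_p$ is \emph{not} uniformly bounded in $n$, the boundedness of $\|\cdot\|_p$ on the compact simplex $\Sigma$ does not close the gap on its own; the de la Vall\'ee Poussin step you add is genuinely needed. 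One minor caution: Assumption~\ref{assu:boundedness} as stated allows the confining compact set to be sample-path dependent, whereas the moment estimates $\E[\|\vx_n - \vmu\|^{2k}] = O(n^{-k})$ you cite typically require a fixed compact on which the Lipschitz and Poisson-equation bounds hold uniformly. You can address this either by working through the truncation scheme of Appendix~\ref{appendix:discussion on boundedness}, or by noting that the $L^2$ bound (which already suffices for $q=2$) is accessible because $\vx_n \in \Sigma$ deterministically and the increment $\bfdelta_{X_{n+1}} - \vpi(\vx_n)$ is uniformly bounded regardless of how close $\vx_n$ gets to $\partial\Sigma$.
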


\begin{remark} \label{remark:boundedness} \ref{assu:boundedness} assumes the \textit{stability} of the SRRW iterates $\{\vx_n\}_{n \geq 0}$, and is a commonplace in convergence analysis of SA algorithms. Proving \ref{assu:boundedness} for a given SA algorithm is highly non-trivial in most practical applications, and can be especially difficult for the case of SA with state-dependent Markovian noise, where the state space is an open subset of an $\R^N$ \cite{andrieu2015stability,karmakar2020stochastic} - the case with our SRRW process. The issue of proving stability is often dealt with by introducing modifications to the original sequence, such as random restarts of the algorithm as the iterates exit a growing sequence of compact subsets of the open state space. In Appendix \ref{appendix:discussion on boundedness}, we use one such modification as a tool to provide intuition behind why \ref{assu:boundedness} is very likely to be satisfied, even though the formal proof remains an open problem. The critical logic behind this intuition revolves around the uniqueness of the target measure $\vmu$ and Lemma \ref{lemma:strict_lyapunov} regarding the Lyapunov function, and is further supported by our numerical simulations in Section \ref{section:numerical_results}. \qed 
\end{remark}

\paragraph{SRRW for MCMC sampling:}

We show that the performance improvement alluded by the form of $\mV(\alpha)$ from Theorem \ref{thm:second order result} and Corollary \ref{cor:avr ordering} is realized when the SRRW is used for MCMC sampling. Consider a sampling agent following the SRRW process for an appropriate choice     of $\mP$ and $\vmu$, for any given $\alpha \geq 0$ on a general graph $\cG(\cN,\cE)$. At each time step $n>0$, it records a sample according to its current location $X_n$, given by $g(X_n)$ for some scalar function $g:\cN \to \R$, whose vectorized form we denote by $\vg \triangleq [g(i)]_{i \in \cN} \in \R^N$, and updates the MCMC \textit{estimator} $\psi_n(g) \triangleq \frac{1}{n}\sum_{k=1}^n g(X_k)$ with the latest sample. Its goal is to estimate the quantity $\vg^T\vmu = \E_{X \sim \mu}[g(X)]$. We now have the following: 

\begin{corollary}\label{cor:estimator and mse clt}
For any scalar valued function $g:\cN \to \R$ such that $\max_{i\in\cN} |g(i)| < \infty$, and for any $\alpha \geq 0$, we have
\begin{align}
    \psi_n(g) &\xrightarrow[a.s.]{n \to \infty} \vg^T\vmu, \\
    \sqrt{n}(\psi_n(g) - \vg^T\vmu) &\xrightarrow[dist.]{n \to \infty} N(0, \vg^T\mV(\alpha)\vg).
\end{align}
\end{corollary}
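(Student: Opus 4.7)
The plan is to reduce both claims of the corollary to the corresponding statements already established for the empirical measure $\vx_n$ in Theorems \ref{thm:first order result} and \ref{thm:second order result}, via a simple linearity observation relating $\psi_n(g)$ and $\vg^T \vx_n$. Recall that $\vx_n = \frac{1}{n+1}\bigl[\vnu + \sum_{k=1}^n \bfdelta_{X_k}\bigr]$, so contracting with $\vg$ yields
\begin{equation*}
    \vg^T \vx_n = \frac{1}{n+1}\vg^T \vnu + \frac{n}{n+1}\psi_n(g),
\end{equation*}
which can be rearranged to
\begin{equation*}
    \psi_n(g) = \frac{n+1}{n}\vg^T \vx_n - \frac{1}{n}\vg^T \vnu.
\end{equation*}
Since $\max_{i\in\cN}|g(i)|<\infty$ and $\vnu \in \Int(\Sigma)$, the term $\vg^T \vnu$ is a finite constant, which drives the subsequent bookkeeping.

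For the first (almost sure) part, Theorem \ref{thm:first order result} gives $\vx_n \to \vmu$ almost surely, and hence $\vg^T \vx_n \to \vg^T \vmu$ by linearity. Multiplying by the deterministic factor $(n+1)/n \to 1$ and noting that $\frac{1}{n}\vg^T\vnu \to 0$ immediately yields $\psi_n(g) \to \vg^T\vmu$ almost surely.

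For the CLT, I would write
\begin{equation*}
    \sqrt{n}\bigl(\psi_n(g)-\vg^T\vmu\bigr) = \vg^T \sqrt{n}(\vx_n - \vmu) + R_n,
\end{equation*}
where $R_n \triangleq \tfrac{1}{\sqrt n}\bigl(\vg^T \vx_n - \vg^T \vnu\bigr)$. The remainder satisfies $|R_n| \le \tfrac{2\max_i |g(i)|}{\sqrt n} \to 0$, so $R_n \to 0$ almost surely and in particular in probability. Theorem \ref{thm:second order result} gives $\sqrt{n}(\vx_n-\vmu) \xrightarrow{dist.} N(\0,\mV(\alpha))$; applying the linear functional $\vg^T(\cdot)$ (equivalently, the continuous mapping theorem together with the fact that linear images of Gaussians are Gaussian) yields $\vg^T\sqrt{n}(\vx_n-\vmu) \xrightarrow{dist.} N(0,\vg^T \mV(\alpha)\vg)$. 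Combining these via Slutsky's theorem finishes the claim.

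There is no real obstacle here: the corollary is essentially a projection of the vector-valued results of the previous section onto the direction $\vg$. The only minor point to verify is that the bookkeeping terms arising from the $\vnu$ offset in the definition of $\vx_n$ vanish after scaling by $\sqrt n$, which is immediate because both $\vg^T\vx_n$ and $\vg^T\vnu$ are uniformly bounded under the hypothesis on $g$.
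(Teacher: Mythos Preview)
Your proposal is correct and follows essentially the same route as the paper. The paper (whose proof of this corollary appears in the appendix under a mislabeled header) also rewrites the true empirical average in terms of the shifted measure $\vx_n$, uses Theorems~\ref{thm:first order result} and~\ref{thm:second order result}, and disposes of the $\vnu$-offset and $(n+1)/n$ factors via Slutsky; the only cosmetic difference is that the paper first passes to the vector $\hat\vx_n \triangleq \tfrac{1}{n}\sum_{k=1}^n \bfdelta_{X_k}$ and then contracts with $\vg$, whereas you contract with $\vg$ first and do the scalar bookkeeping afterwards.
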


\begin{corollary}\label{cor:avr bound}
    Under the same assumption of Corollary \ref{cor:estimator and mse clt}, we have
    \begin{equation}\label{eqn:reduction ratio}
        \frac{\vg^T\mV(\alpha)\vg}{\vg^T\mV(0)\vg} \leq \E \left[ \frac{1}{2\alpha(1 + \Lambda) + 1} \right], 
    \end{equation}
    where $\Lambda \!\in\! (\!-1,1)$ is a random variable taking values $\lambda_i, ~i \!\in\! \{1,\!\cdots\!,N\!-\!1\}$ with probability proportional to $(\vg^T \vu_i)^2$.
\end{corollary}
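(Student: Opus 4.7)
My plan is to exploit the explicit closed form of $\mV(\alpha)$ from Theorem~\ref{thm:second order result} and reduce the bound \eqref{eqn:reduction ratio} to a one-line covariance inequality between two monotone functions of the eigenvalues of $\mP$. I expect this to be a purely algebraic corollary of Theorem~\ref{thm:second order result}; no real obstacle is anticipated.

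First I would use Theorem~\ref{thm:second order result} to expand
$$
\vg^T \mV(\alpha)\vg \;=\; \sum_{i=1}^{N-1} \frac{1}{2\alpha(1+\lambda_i)+1} \cdot \frac{1+\lambda_i}{1-\lambda_i} \,(\vg^T \vu_i)^2,
$$
and similarly $\vg^T \mV(0) \vg = \sum_i \frac{1+\lambda_i}{1-\lambda_i}(\vg^T \vu_i)^2$. Setting $a_i(\alpha) \triangleq 1/(2\alpha(1+\lambda_i)+1)$, $b_i \triangleq (1+\lambda_i)/(1-\lambda_i)$, and the non-negative weights $c_i \triangleq (\vg^T \vu_i)^2$, the left-hand side of \eqref{eqn:reduction ratio} becomes $\sum_i a_i(\alpha)\,b_i\,c_i / \sum_i b_i\,c_i$, while the right-hand side, by the definition of $\Lambda$, is exactly $\sum_i a_i(\alpha)\,c_i / \sum_j c_j$.

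Next I would view both expressions through the common probability measure $p_i \triangleq c_i/\sum_j c_j$ on $\{1,\ldots,N-1\}$, so that the right-hand side is $\E_p[a(\alpha)]$. A short manipulation gives
$$
\frac{\sum_i a_i(\alpha)\,b_i\,c_i}{\sum_i b_i\,c_i} \;=\; \E_p[a(\alpha)] \;+\; \frac{\mathrm{Cov}_p\!\bigl(a(\alpha),\,b\bigr)}{\E_p[b]},
$$
reducing \eqref{eqn:reduction ratio} to proving $\mathrm{Cov}_p(a(\alpha),b) \le 0$. For every fixed $\alpha \ge 0$, $a_i(\alpha)$ is strictly decreasing in $\lambda_i$ whereas $b_i$ is strictly increasing in $\lambda_i \in (-1,1)$, so $a(\alpha)$ and $b$ are monotone functions of the common variable $\lambda$ in opposite directions. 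The two-i.i.d.-copies identity $2\,\mathrm{Cov}_p(a,b) = \E[(a(\Lambda)-a(\Lambda'))(b(\Lambda)-b(\Lambda'))]$, with $\Lambda,\Lambda'$ drawn i.i.d.~from $p$, then yields $\mathrm{Cov}_p(a(\alpha),b) \le 0$ (equivalently, this is the weighted Chebyshev sum inequality), completing the argument. The only degenerate case, $c_i = 0$ for all $i < N$ (i.e., $\vg \in \mathrm{span}(\vu_N)$), makes both sides of \eqref{eqn:reduction ratio} vanish, so the bound is trivial there.
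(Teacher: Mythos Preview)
Your argument is correct. The spectral expansion of $\vg^T\mV(\alpha)\vg$ is exactly as you write, and reducing the bound to $\mathrm{Cov}_p(a(\alpha),b)\le 0$ via the identity $E_p[ab]/E_p[b]=E_p[a]+\mathrm{Cov}_p(a,b)/E_p[b]$ is clean; the opposite monotonicity of $\lambda\mapsto 1/(2\alpha(1+\lambda)+1)$ and $\lambda\mapsto(1+\lambda)/(1-\lambda)$ on $(-1,1)$ then gives the result by the weighted Chebyshev sum inequality (or, equivalently, your two-i.i.d.-copies identity). The degenerate case you flag (all $c_i=0$) is indeed the only one where the ratio is ill-defined, since $b_i>0$ under the standing assumption $\lambda_i\in(-1,1)$.

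Regarding the comparison: the appendix in the paper contains a block labeled as the proof of this corollary, but its content (passing from $\vx_n$ to $\hat\vx_n$ and invoking Slutsky's theorem to transfer the CLT) is manifestly the argument for Corollary~\ref{cor:estimator and mse clt}, not for the variance-ratio bound. In other words, the paper does not actually supply a proof of Corollary~\ref{cor:avr bound}; the labels in the appendix appear to be shifted by one. Your Chebyshev/covariance argument therefore fills a genuine gap and is, as far as I can see, the natural proof of the stated inequality.
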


Corollary \ref{cor:estimator and mse clt} provides almost sure convergence and CLT for the SRRW-driven MCMC estimator $\psi_n(g)$, with its asymptotic variance in terms of $\vg$ and $\mV(\alpha)$. Corollary \ref{cor:avr bound} then quantifies the improvement of the SRRW-driven sampler with $\alpha > 0$ over the baseline MCMC-driven by $\mP$ ($\alpha = 0$ case). We see that reduction ratio goes down to zero as $\alpha \to \infty$ with speed $O(1/\alpha)$. Consider an \textit{i.i.d.} random variable $Y$ with distribution $\vmu$ for which the sampling variance is given by $\text{Var}(g(Y)) = (\vg^2)^T\vmu -  (\vg^T\vmu)^2 > 0$ for all non-trivial choices of function $g(\cdot)$ and $\mu$.\footnote{Here, $\vg^2$ is the vector with its $i$'th entry as $g(i)^2$.} Since $\vg^T\mV(\alpha) \vg \to 0$ as $\alpha \to \infty$, there exists some $\hat \alpha > 0$ such that for each $\alpha > \hat \alpha$, the SRRW as an MCMC estimator outperforms even an i.i.d. sampler, asymptotically. While the value of $\hat \alpha$ depends on the function $g(\cdot)$, target measure $\vmu$ and the spectrum of the underlying base chain $\mP$, we expect that this crossover would take place for moderate values of $\alpha$, as seen from \eqref{eqn:reduction ratio} with (at least) $O(1/\alpha)$ speed. This has the advantage of offsetting potentially undesirable transient behaviour resulting from employing very large $\alpha$, as will also be elaborated in the next section while presenting our simulation results.

Before ending this section, we briefly show that for any choice of baseline Markov chain $\mP$ with stationary distribution $\vmu$, one can construct an \textit{unbiased} version of the estimator $\psi_n(g)$ which estimates the \textit{uniform} average of the function $g(\cdot)$, given by $\frac{1}{N}\vg^T\ones = \E_{X\sim\text{unif.}}[g(X)]$. This procedure is virtually identical to the importance-reweighting typically done in MCMC applications \cite{LeeSIGMETRICS12}, but now sampled by SRRW instead of a Markov chain. To be specific, consider a \textit{weight function} $w:\cN \to \R$ where $w(i) \propto \frac{1}{\mu_i}$ for all $i\in\cN$, and let $h:\cN \to \R$ be $h(i) \triangleq w(i)g(i)$ with $\vh \triangleq [h(i)]_{i\in\cN} \in \R^N$ being its vectorized form.
Consider a new estimator, defined as
\begin{equation}\label{eqn:_importance_rw_unbiased_esimator}
    \hat \psi_n(g) \triangleq \frac{\psi_n(wg)}{\psi_n(w)} = \frac{\sum_{k=1}^n w(X_k)g(X_k)}{\sum_{k=1}^n w(X_k)}.
\end{equation}
We then the following corollary, the proof of which follows by application of Slutsky's theorem \citep[][pg. 332]{ash2000probability}.
\begin{corollary}\label{cor:importance_reweighting}
For the estimator $\hat \psi_n(g)$, results of Corollaries \ref{cor:estimator and mse clt} and \ref{cor:avr bound} hold with $\vg$ and $\vmu$ replaced by $\vh$ and $\frac{1}{N}\ones$ respectively.
\end{corollary}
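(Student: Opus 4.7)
The plan is to apply Slutsky's theorem to the ratio estimator $\hat\psi_n(g) = \psi_n(h)/\psi_n(w)$, treating both numerator and denominator as SRRW-driven MCMC estimators already covered by Corollary~\ref{cor:estimator and mse clt}. Since $\vmu \in \Int(\Sigma)$ and $\cN$ is finite, $\min_{i\in\cN}\mu_i > 0$, so both $w$ and $h = wg$ are bounded functions on $\cN$ and Corollary~\ref{cor:estimator and mse clt} applies verbatim to each of $\psi_n(w)$ and $\psi_n(h)$.

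For almost sure convergence, Corollary~\ref{cor:estimator and mse clt} yields $\psi_n(w) \to \vw^T\vmu$ and $\psi_n(h) \to \vh^T\vmu$ a.s. Writing $w(i) = c/\mu_i$, a direct computation gives $\vw^T\vmu = cN > 0$ and $\vh^T\vmu = c\,\vg^T\ones$, so the continuous mapping theorem applied to $(a,b)\mapsto a/b$ (continuous at $(\vh^T\vmu,\vw^T\vmu)$ because the denominator is strictly positive) produces
\[
\hat\psi_n(g) \;\xrightarrow[a.s.]{n\to\infty}\; \frac{\vh^T\vmu}{\vw^T\vmu} \;=\; \frac{\vg^T\ones}{N},
\]
which is the uniform average of $g$ and, under the normalization $c = 1/N$, equals $\vh^T(\ones/N)$ as in the stated form.

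For the CLT I would normalize $w$ so that $\vw^T\vmu = 1$ and use the standard centering rewriting
\[
\sqrt{n}\bigl(\hat\psi_n(g) - \vh^T\vmu\bigr) = \frac{\sqrt{n}\,\psi_n(f)}{\psi_n(w)}, \quad f \triangleq h - (\vh^T\vmu)\,w,
\]
where $\vf^T\vmu = 0$ by construction. Since $f$ is itself a fixed bounded function on $\cN$ independent of $n$, Corollary~\ref{cor:estimator and mse clt} applies directly to give $\sqrt{n}\,\psi_n(f) \to N(0,\vf^T\mV(\alpha)\vf)$ in distribution, and combining this with $\psi_n(w) \to 1$ a.s.\ via Slutsky's theorem delivers the CLT. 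The reduction-ratio bound of Corollary~\ref{cor:avr bound} transfers by the same spectral expansion of $\mV(\alpha)$ in~\eqref{eqn:covariance matrix} applied to $\vf$ in place of $\vg$.

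I do not foresee any serious obstacle; the whole argument is a Slutsky/continuous-mapping bookkeeping exercise, and the centering trick lets one reuse Corollary~\ref{cor:estimator and mse clt} directly without ever needing a separate joint CLT for $(\psi_n(h),\psi_n(w))$. The one subtlety worth flagging is that the natural asymptotic variance emerging from the delta method is $\vf^T\mV(\alpha)\vf$, which one should read as the base CLT formula with $\vg$ replaced by the centered reweighted function $\vf$ (and $\vmu$ by $\ones/N$); this is the sense in which the substitutions ``$\vg \to \vh$, $\vmu \to \ones/N$'' in the statement are to be interpreted.
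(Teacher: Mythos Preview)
Your proposal is correct and follows the same approach as the paper, which simply states that the result ``follows by application of Slutsky's theorem.'' Your proof supplies the standard details---the centering identity $\sqrt{n}(\hat\psi_n(g)-\vh^T\vmu)=\sqrt{n}\,\psi_n(f)/\psi_n(w)$ with $f=h-(\vh^T\vmu)w$---that make the Slutsky step go through, and your remark that the asymptotic variance is most naturally expressed as $\vf^T\mV(\alpha)\vf$ (the centered reweighted test function) is the right way to read the informal ``replace $\vg$ by $\vh$'' substitution in the statement.
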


\section{Simulation results} \label{section:numerical_results}

\begin{figure*}[!ht]
    \centering
    \subfigure[Convergence of $\vx_n$ to the uniform distribution.]{\includegraphics[width=0.5\textwidth]{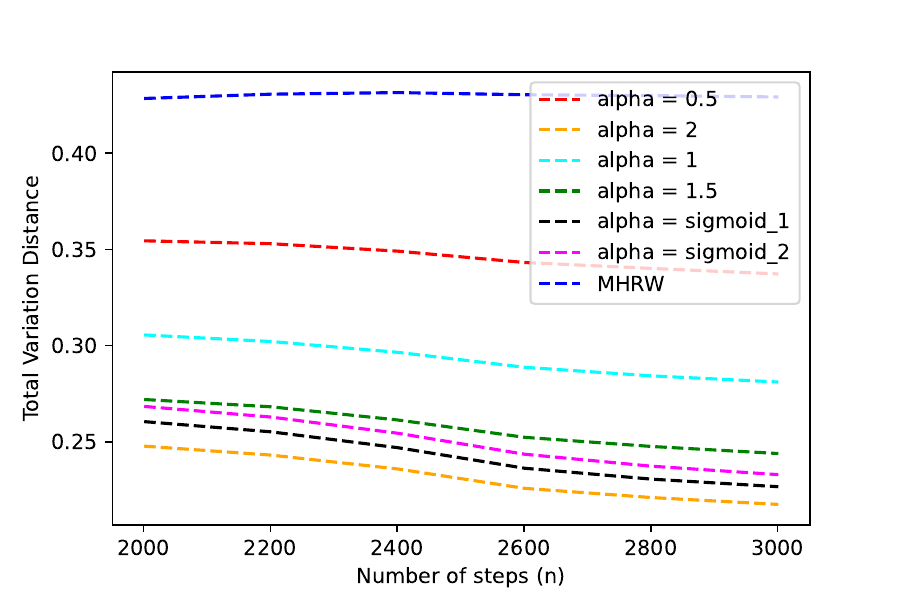} \label{fig:1a}} \hspace{-10mm}
    \subfigure[Convergence of $\psi_n(g)$ to the ground truth $\vg^T\ones/N$.]{\includegraphics[width=0.5\textwidth]{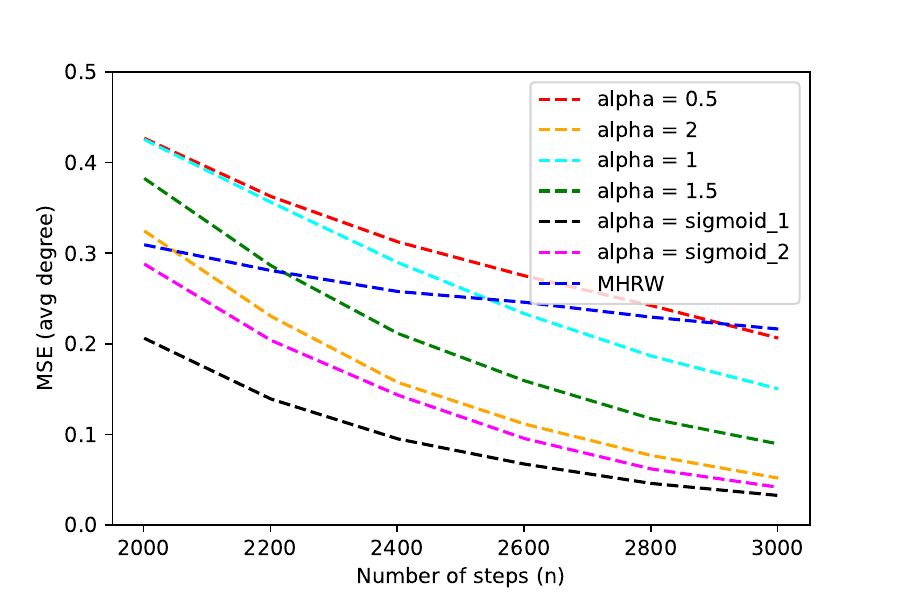} \label{fig:1b}}
    \caption{Simulations of the SRRW process for values of $\alpha \in [0,2]$, where $\alpha = 0$ corresponds to \textit{MHRW} - the underlying Metropolis-Hastings \textit{base} chain, with no self-repellence properties. The two \textit{sigmoid} functions refer to the case where $\alpha$ is made to gradually increase over time, from $0$ to $2$. Sigmoid-1 is of the type $\frac{1}{0.5+e^{-n+0.25 N}}$ while Sigmoid-2 is of the type $\frac{n}{100+0.5n}$, where $n\geq0$ is the time parameter. Further tuning of the sigmoid functions may lead to empirically more efficient MCMC algorithms.}
    \label{fig:experiments}
\end{figure*}

We now present simulation results which support our theoretical findings. We simulate the SRRW process over the wiki-Vote graph \citep{snap}, which is an undirected, connected graph with $889$ nodes and $2914$ edge. We set $\mP$, the \textit{base} Markov chain, to be the Metropolis Hastings random walk (MHRW) - a commonly used Markov chain for unbiased graph sampling. Our target measure $\vmu$ is therefore the uniform distribution, that is, $\vmu = \frac{1}{N}\ones$. We also measure the \textit{average degree} of the graph by employing the estimator $\psi_n(g)$, where $g(i) = \text{deg}(i)$ for each $i\in\cN$, and $\vg^T\vmu = \sum_{i\in\cN}\text{deg(i)}/N$ .

To track the convergence of the stochastic process, we measure the \textit{Total Variation Distance} (TVD), evaluated at each time step as
\begin{equation}
    \text{TVD}(\vx_n,\vmu) = \frac{1}{2}\| \vx_n - \vmu \|,
\end{equation}
where $\vx_n$ is the empirical distribution of the SRRW process. The TVD is then averaged over the total number of simulation runs. We also keep track of the \textit{Mean Square Error} (MSE) of the estimator $\psi_n(g)$ in estimating the average degree, given by
\begin{equation}
    \text{MSE}(\psi_n(g),\vg^T\vmu) = \frac{1}{K}\sum_{k=1}^K (\psi^k_n(g) - \vg^T\vmu)^2
\end{equation}
where the \textit{mean} is taken over the total number of simulation runs $K$, and $\psi_n^k(g)$ denotes the estimator for the $k$-th simulation run. 

Our final simulation results are shown in Figures \ref{fig:1a} and \ref{fig:1b}, where the curves are obtained as averages over $K=500$ simulation runs. For both, TVD and MSE, we observe that larger values of $\alpha$ perform better asymptotically as predicted by our theory, significantly outperforming the case of $\alpha=0$ (the sampler driven by base chain MHRW). We also observe for the MSE plot in Figure \ref{fig:1b} that the case of time-varying $\alpha$ provides better performance for the task of estimating the average degree. While our theoretical framework only supports constant values of $\alpha\geq 0$, the numerically observed convergence and efficiency of the time-varying case opens up possibilities for construction of a more adaptive version of the SRRW algorithm, which can be a possible future direction. Additional simulation results comparing Self-Repellent Random walks with non-backtracking approaches can be found in Appendix \ref{appendix:additional_simulation_results}.

\section{Conclusion}\label{section:conclusion}

In this paper, we introduced a \textit{self-repellent random walk} which can be designed to sample from any target distribution $\vmu\in\Int(\Sigma)$ via nonlinear interacting with its own occupational measure. We provide convergence results for the sequence of occupational measures by first analyzing a closely related deterministic process whose convergence is proved with the aid of a Lyapunov function, and then utilizing results from stochastic approximation theory to establish first and second order convergence results. We provide an explicit form for the asymptotic co-variance matrix arising out of the CLT result, which enables us to prove that the sampling variance decreases monotonically to zero as $\alpha\to \infty$, with speed at least $O(1/\alpha)$. Our results advocate for study into the design of nonlinear Markov chains for MCMC and other learning applications.

\section{Acknowledgments and Disclosure of Funding}
We thank the anonymous reviewers for their constructive comments. This research was largely conducted while Vishwaraj Doshi was with the Operations Research Graduate Program, North Carolina State University. This work was supported in part by National Science Foundation under Grant Nos. CNS-2007423, IIS-1910749, and CNS-1824518.

\bibliography{references_master.bib}
\bibliographystyle{icml2023}

\newpage
\appendix
\onecolumn


\section{Proof of results in Section \ref{Section:algorithmic setup}}\label{appendix:algorithmic setup}

\begin{proof}[\textbf{Proof of Proposition \ref{prop:form of stationary dist}}]
    It is enough to show that the form of $\vpi(\vx)$ specified in \eqref{eqn:pi_x closed form} satisfies the detailed balance equation. For any $i,j\in\cN$, we have 
    \begin{align}
        \pi_i(\vx)K_{ij}[\vx] &\propto \left[\sum_{k\in\cN}\mu_i P_{ik} \left( \frac{x_i}{\mu_i} \right)^{-\alpha} \left( \frac{x_k}{\mu_k} \right)^{-\alpha}\right] \frac{P_{ij}\left( \frac{x_j}{\mu_j} \right)^{-\alpha}}{\sum_{k\in\cN} P_{ik} \left( \frac{x_k}{\mu_k} \right)^{-\alpha}} \times \frac{\mu_i}{\mu_i} \nonumber  \\
        &=  \left( \frac{x_i}{\mu_i} \right)^{-\alpha}\left[\sum_{k\in\cN}\mu_i P_{ik}  \left( \frac{x_k}{\mu_k} \right)^{-\alpha}\right] \frac{\mu_iP_{ij}\left( \frac{x_j}{\mu_j} \right)^{-\alpha}}{\sum_{k\in\cN} \mu_iP_{ik} \left( \frac{x_k}{\mu_k} \right)^{-\alpha}} \nonumber\\
        &= \mu_iP_{ij}\left( \frac{x_i}{\mu_i} \right)^{-\alpha}\left( \frac{x_j}{\mu_j} \right)^{-\alpha}. \label{eqn:srrw_reversible_1}
    \end{align}
    Similarly, we can write
    \begin{equation}\label{eqn:srrw_reversible_2}
        \pi_j(\vx)K_{ji}[\vx] \propto \mu_jP_{ji}\left( \frac{x_i}{\mu_i} \right)^{-\alpha}\left( \frac{x_j}{\mu_j} \right)^{-\alpha},
    \end{equation}
    and using the fact that $\mu_iP_{ij} = \mu_jP_{ji}$ due to the time-reversibility property of $\mP$ with $\vmu$, \eqref{eqn:srrw_reversible_1} and \eqref{eqn:srrw_reversible_2} are equivalent, which completes the proof.
\end{proof}

\section{Proof of results in Section \ref{section:ode_analysis}}\label{appendix:ode analysis}

\begin{proof}[\textbf{Proof of Proposition \ref{prop:uniqueness}}]
    We begin by using the form of $\vpi(\vx)$ derived in Proposition \ref{prop:form of stationary dist} to study the fixed point equation for ODE \eqref{eqn:main ode}, which can be written for each $i \in \cN$ as
    \begin{align*}
        x_i = \pi_i(\vx) = \frac{1}{D}{\sum \limits_{j\in\cN} \mu_i P_{ij} \left(\frac{x_i}{\mu_i}\right)^{\!-\alpha} \!\left(\frac{x_j}{\mu_j}\right)^{\!-\alpha}},
    \end{align*}
    where the normalizing constant $D$ is given by $D = \sum \limits_{i\in\cN} \sum \limits_{j\in\cN} \mu_i P_{ij} \left(\frac{x_i}{\mu_i}\right)^{\!-\alpha} \!\left(\frac{x_j}{\mu_j}\right)^{\!-\alpha}$. Consider a change of variable where $y_i \triangleq x_i/\mu_i$ for all $i\in\cN$, and rewrite the above equation to obtain
    \begin{equation}\label{eqn:fixed_point_3}
        y_i = \frac{1}{D} y_i^{-\alpha} \sum_{j\in\cN} P_{ij} y_j^{-\alpha}.
    \end{equation}
    It is enough to show that the only possible positive solutions to \eqref{eqn:fixed_point_3} are of the type $y_i = c > 0$ for all $i\in\cN$. To check that it is indeed a solution, observe that
    \begin{align*}
        D &= c^{-2\alpha} \sum_{i\in\cN} \sum_{j\in\cN} \mu_iP_{ij}
          = c^{-2\alpha} \sum_{i\in\cN} \sum_{j\in\cN} \mu_jP_{ji} 
          = c^{-2\alpha}\sum_{j\in\cN} \mu_j \sum_{i\in\cN}P_{ji}= c^{-2\alpha},
    \end{align*}
    where the second equality comes from the time-reversibility of $(\mu,\mP)$, which means that $\mu_i P_{ij} = \mu_j P_{ji}$ for all $i,j\in\cN$, and the fourth equality comes from $\mP$ being a stochastic matrix. Substituting $D = c^{-2\alpha}$ in \eqref{eqn:fixed_point_3} gives us
    \begin{align*}
        y_i = c = \frac{1}{c^{-2\alpha}} c^{-\alpha}\sum \limits_{j \in \cN} P_{ij} c^{-\alpha} = 1,
    \end{align*}
    implying $x_i = \mu_i$ for all $i\in\cN$.

    Suppose there exists another positive solution $\vy'$ of \eqref{eqn:fixed_point_3} such that $\vy' \neq c\ones$ for some $c>0$. Without loss of generality, we assume $y_1' = \max_{i\in\cN} \{y_i'\}$, $y_N' = \min_{i\in\cN}\{y_i'\}$. Since $\vy' \neq c\ones$, we have $y_1' > y_N'$, and from the fixed point equation \eqref{eqn:fixed_point_3} we have
    \begin{subequations}
    \begin{equation}\label{eqn:y1'}
        y_1' = \frac{1}{D} (y_1')^{-\alpha} \sum_{j\in\cN} P_{1j} (y_j')^{-\alpha},
    \end{equation}
    \begin{equation}\label{eqn:yN'}
        y_N' = \frac{1}{D} (y_N')^{-\alpha} \sum_{k\in\cN} P_{Nk} (y_k')^{-\alpha}.
    \end{equation}
    \end{subequations}
    Dividing \eqref{eqn:y1'} by \eqref{eqn:yN'} and rearranging some terms gives us
    \begin{equation}\label{eqn:y1/yN}
        \left(\frac{y_1'}{y_N'}\right)^{\alpha+1} = \frac{\sum_{j\in\cN} P_{1j} y_j'^{-\alpha}}{\sum_{k\in\cN} P_{Nk} y_k'^{-\alpha}}.
    \end{equation}
    
    Since $\alpha \geq 0$, we have $(y_1')^{-\alpha} \leq (y_i')^{-\alpha} \leq (y_N')^{-\alpha}$ for all $i\in\cN$, and \eqref{eqn:y1/yN} has the upper bound
    \begin{equation*}
        \left(\frac{y_1'}{y_N'}\right)^{\alpha+1} \leq \frac{(y_N')^{-\alpha}\sum_{j\in\cN}P_{1j}}{(y_1')^{-\alpha}\sum_{k\in\cN}P_{Nk}} =  \left(\frac{y_1'}{y_N'}\right)^{\alpha},
    \end{equation*}
    which shows $y_1'/y_N' \leq 1$ and contradicts $y_1'>y_N'$. This completes the proof.

    We can additionally prove that that Proposition \ref{prop:uniqueness} holds true for all $\alpha \in (-0.5,0)$ as well. When $\alpha < 0$, we have $(y_N')^{-\alpha} \leq (y_i')^{-\alpha} \leq (y_1')^{-\alpha}$ for all $i\in\cN$, and \eqref{eqn:y1/yN} has the upper bound
    \begin{equation*}
        \left(\frac{y_1'}{y_N'}\right)^{\alpha+1} \leq \frac{(y_1')^{-\alpha}\sum_{j\in\cN}P_{1j}}{(y_N')^{-\alpha}\sum_{k\in\cN}P_{Nk}} = \left(\frac{y_1'}{y_N'}\right)^{-\alpha},
    \end{equation*}
    which leads to $(y_1'/y_N')^{2\alpha +1} \leq 1$. If we additionally have $2\alpha+1 > 0$, or equivalently $\alpha \in (-0.5,0)$, we have $y_1' \leq y_N'$, which contradicts $y_1' > y_N'$, implying $x_i = \mu_i$ for all $i\in\cN$ even when $\alpha \in (-0.5,0)$ and the random walker is \textit{self-attractive}.
\end{proof}

\begin{proof}[\textbf{Proof of Lemma \ref{lemma:strict_lyapunov}}]
    We use the notation $\left[\frac{\vx}{\vmu}\right]^{-\alpha}$ to denote a vector with the $i$'th entry being $\left[ \frac{x_i}{\mu_i} \right]^{-\alpha}$, for any $\alpha\in\R$. For any vector $\vv\in\R^N$, we will occasionally use the notation $[\vv]_i$ to denote its $i$'th entry.
    Taking partial derivative of $w(\vx)$ with respect to $x_i$, we have
    \begin{equation}\label{eqn:partial_V_xi}
        \frac{\partial w(\vx)}{\partial x_i}  = \frac{-2\alpha}{\mu_i}\left[\frac{x_i}{\mu_i}\right]^{-\alpha-1}\left[\mD_{\vmu} \mP\left[\frac{\vx}{\vmu}\right]^{-\alpha}\right]_i
    \end{equation}
    Taking derivative of $w(\vx)$ along trajectories of the ODE system \eqref{eqn:main ode}, we get
    \begin{align}
        \frac{d}{dt}w(\vx)  &= \sum_{i} \frac{\partial w(\vx)}{\partial x_i}\frac{d x_i}{dt} = \nabla w(\vx)^T\vh(\vx) = -2\alpha \sum_{i\in\cN}\left(\frac{1}{\mu_i}\left[\frac{x_i}{\mu_i}\right]^{-\alpha-1}\left[\mD_{\vmu}\mP\left[\frac{\vx}{\vmu}\right]^{-\alpha}\right]_i\right)\cdot(\pi_i(\vx)-x_i) \nonumber \\
        &= -2\alpha \sum_{i\in\cN}\left(\frac{1}{\mu_i}\left[\frac{x_i}{\mu_i}\right]^{-\alpha-1}\left[\mD_{\vmu} \mP\left[\frac{\vx}{\vmu}\right]^{-\alpha}\right]_i\right)\cdot \left( \frac{1}{w(\vx)}\left[\frac{x_i}{\mu_i}\right]^{-\alpha}\left[\mD_{\vmu} \mP\left[\frac{\vx}{\vmu}\right]^{-\alpha}\right]_i  - x_i \right) \nonumber \\
        &= \frac{-2\alpha}{w(\vx)} \sum_{i\in\cN}\left(\frac{1}{\mu_i}\left[\frac{x_i}{\mu_i}\right]^{-\alpha-1}\left[\mD_{\vmu} \mP\left[\frac{\vx}{\vmu}\right]^{-\alpha}\right]_i\right)\cdot \left( \left[\frac{x_i}{\mu_i}\right]^{-\alpha}\left[\mD_{\vmu} \mP\left[\frac{\vx}{\vmu}\right]^{-\alpha}\right]_i  - w(\vx)x_i \right) \nonumber \\
        &= \frac{-2\alpha}{w(\vx)} \sum_{i \in \cN} B_i(\vx) + C_i(\vx), \label{eqn:bi+ci}
    \end{align}
    where
    $$B_i(\vx) = \left(\frac{1}{\mu_i}\left[\frac{x_i}{\mu_i}\right]^{-\alpha-1}\left[\mD_{\vmu} \mP\left[\frac{\vx}{\vmu}\right]^{-\alpha}\right]_i\right) \cdot \left( \left[\frac{x_i}{\mu_i}\right]^{-\alpha}\left[\mD_{\vmu} \mP\left[\frac{\vx}{\vmu}\right]^{-\alpha}\right]_i\right),$$
    and
    $$C_i(\vx) = -\left(\frac{1}{\mu_i}\left[\frac{x_i}{\mu_i}\right]^{-\alpha-1}\left[\mD_{\vmu} \mP\left[\frac{\vx}{\vmu}\right]^{-\alpha}\right]_i\right) \cdot \left(w(\vx)x_i \right).$$
    Define a random variable $Z(\vx)$ which takes values $Z_i(\vx) = \frac{1}{\mu_i}\left[\frac{x_i}{\mu_i}\right]^{-\alpha-1}\left[\mD_{\vmu} \mP\left[\frac{\vx}{\vmu}\right]^{-\alpha}\right]_i$ with probability $x_i$. Then, we can see that
    $$B_i(\vx) = \left(\frac{1}{\mu_i}\left[\frac{x_i}{\mu_i}\right]^{-\alpha-1}\left[\mD_{\vmu} \mP\left[\frac{\vx}{\vmu}\right]^{-\alpha}\right]_i\right) \cdot \left( \frac{1}{\mu_i} \left[\frac{x_i}{\mu_i}\right]^{-\alpha-1}\left[\mD_{\vmu} \mP\left[\frac{\vx}{\vmu}\right]^{-\alpha}\right]_i\right)\cdot x_i
    = Z_i(\vx)^2 x_i,$$
    and as a result,
    \begin{equation}\label{eqn:sum bi}
        \sum_{i \in \cN} B_i(\vx) = \sum_{i \in \cN} Z_i(\vx)^2 x_i = \E[Z(\vx)^2].
    \end{equation}
    We can similarly write $\sum_{i\in\cN} C_i(\vx)$ as
    \begin{align}
        \sum_{i\in\cN} C_i(\vx) &= -w(\vx)\sum_{i \in \cN}\frac{1}{\mu_i} \left[\frac{x_i}{\mu_i}\right]^{-\alpha-1}\left[\mD_{\vmu} \mP\left[\frac{\vx}{\vmu}\right]^{-\alpha}\right]_i\cdot x_i \nonumber\\
        &= -\left(\sum_{k \in \cN}\frac{1}{\mu_k} \left[\frac{x_k}{\mu_k}\right]^{-\alpha-1}\left[\mD_{\vmu} \mP\left[\frac{\vx}{\vmu}\right]^{-\alpha}\right]_k\cdot x_k \right) \left(   \sum_{i \in \cN}\frac{1}{\mu_i} \left[\frac{x_i}{\mu_i}\right]^{-\alpha-1}\left[\mD_{\vmu} \mP\left[\frac{\vx}{\vmu}\right]^{-\alpha}\right]_i\cdot x_i\right) \nonumber\\
        &= -\left(\sum_{k\in\cN} Z_k(\vx) x_k \right) \left(\sum_{i\in\cN} Z_i(\vx) x_i \right) = -\E[Z(\vx)]^2. \label{eqn:sum ci}
    \end{align}
    Substituting \eqref{eqn:sum bi} and \eqref{eqn:sum ci} in \eqref{eqn:bi+ci} gives us
    \begin{equation}\label{eqn:var z}
        \frac{d}{dt}w(\vx) = \frac{-2\alpha}{w(\vx)}\sum_{i\in\cN} B_i(\vx) + C_i(\vx) = \frac{-2\alpha}{w(\vx)}\left(\E[Z(\vx)^2] - \E[Z(\vx)]^2\right) = \frac{-2\alpha}{w(\vx)}\text{Var}[Z(\vx)] \leq 0.
    \end{equation}
    To show that the equality is only achieved at the fixed point, all we need to show is that $\text{Var}[Z(\vx)] = 0 \iff \vx = \vpi(\vx)$. The Variance term is zero if and only if $Z_i = Z_j$ for all $i,j\in\cN$, that is
    \begin{align}
        \frac{1}{\mu_i}\left[\frac{x_i}{\mu_i}\right]^{-\alpha-1}\left[\mD_{\vmu} \mP\left[\frac{\vx}{\vmu}\right]^{-\alpha}\right]_i = \frac{1}{\mu_i}\left[\frac{x_j}{\mu_j}\right]^{-\alpha-1}\left[\mD_{\vmu} \mP\left[\frac{\vx}{\vmu}\right]^{-\alpha}\right]_j 
        &\iff \frac{\pi_i(\vx)}{x_i} = \frac{\pi_j(\vx)}{x_j}  \label{eqn:zi=zj}
    \end{align}
    for all $i,j \in \cN$, where the equality on the left hand size comes by rewriting the form for $Z_i$ as $w(\vx)\pi_i(\vx)/x_i$. Equation \eqref{eqn:zi=zj} is true if and only if $\pi_i(\vx) = x_i$ for all $i\in\cN$, which completes the proof.
\end{proof}

\begin{proof}[\textbf{Proof of Theorem \ref{thm:global_convergence_ode}}]
    The trajectories $\{\vx(t)\}_{t\geq0}$ starting from any $\vx(0)\in\Int(\Sigma)$ are relatively compact and bounded, since the flow of the ODE system \eqref{eqn:main ode} leaves the probability simplex positively invariant. Then, the global asymptotic stability of $\vmu \in \Int(\Sigma)$ follows by application of Proposition \ref{prop:uniqueness}, Lemma \ref{lemma:strict_lyapunov}, and the LaSalle Invariance principle (Theorem 3.1 in \cite{JB_semiflow}, Chapter 5 in \cite{Vidyasagar}, Chapter 3 in \cite{Slotine}).
\end{proof}

\begin{proof}[\textbf{Proof of Lemma \ref{lemma:srrw_jacobian}}]
    Consider $f_i(\vx)$ defined as 
    $$ 
    f_i(\vx) \triangleq \sum \limits_{j\in\cN} \mu_i P_{ij} \left(\frac{x_i}{\mu_i}\right)^{\!-\alpha} \!\left(\frac{x_j}{\mu_j}\right)^{\!-\alpha}
    $$
    for all $i\in\cN$, $\vx \in \Int(\Sigma)$, and let $g(\vx) \triangleq \sum_{k\in\cN} f_k(\vx)$. Then, for $h_i(\vx) = \pi_i(\vx) - x_i$ (the $i$'th entry of $\vh(\vx)$ in \eqref{eqn:main ode}), we have
    \begin{equation}\label{eqn:h_i srrw}
    h_i(\vx) = \frac{f_i(\vx)}{g(\vx)} - x_i,
    \end{equation}
    for all $i\in\cN$ and $\vx\in\Int(\Sigma)$, and its partial derivatives follow
    \begin{equation}\label{eqn:h_i partial}
        \frac{\partial h_i(\vx)}{\partial x_j} = \frac{g(\vx)\frac{\partial f_i(\vx)}{\partial x_j} - f_i(\vx)\frac{\partial g(\vx)}{\partial x_j}}{g(\vx)^2} - \Char_{\{i=j\}}.
    \end{equation}
    To deduce $\frac{\partial h_i(\vx)}{\partial x_j} \big \vert_{\vx = \vmu}$, we evaluate each quantity of the above equation evaluated at $\vx = \vmu$, and then substitute them back in the above expression. We have
    \begin{align*}
        &f_i(\vmu) = \sum_{j\in\cN} \mu_i P_{ij} = \mu_i\sum_{j\in\cN}P_{ij} = \mu_i, &\forall ~i \in \cN, \\
        &g(\vmu) = \sum_{i \in \cN} f_i(\vmu) = \sum_{i\in\cN} \mu_i = 1, & \\
        &\frac{\partial f_i(\vx)}{\partial x_i} \Bigg\vert_{\vx=\vmu} =  -\frac{\alpha}{x_i}\sum_{j\in\cN} \mu_i P_{ij}\left(\frac{x_i}{\mu_i}\right)^{-\alpha}\left( \frac{x_j}{\mu_j}\right)^{-\alpha} - \frac{\alpha}{x_i}\mu_i P_{ii}\left(\frac{x_i}{\mu_i}\right)^{-2\alpha}  \Bigg \vert_{\vx = \vmu}  =  -\alpha  - \alpha P_{ii}, &\forall ~i \in \cN,\\
        &\frac{\partial f_i(\vx)}{\partial x_j} \Bigg\vert_{\vx=\vmu} = -\frac{\alpha}{x_j}\mu_i P_{ij}\left(\frac{x_i}{\mu_i}\right)^{-\alpha}\left( \frac{x_j}{\mu_j}\right)^{-\alpha} \Bigg \vert_{\vx = \vmu} = - \alpha\frac{\mu_i P_{ij}}{\mu_j} = -\alpha P_{ji}, &\forall ~i\neq j \in \cN,\\
        &\frac{\partial g(\vx)}{\partial x_i} \Bigg\vert_{\vx=\vmu} = -\frac{2\alpha}{x_i}\sum_{j\in\cN} \mu_i P_{ij}\left(\frac{x_i}{\mu_i}\right)^{-\alpha}\left( \frac{x_j}{\mu_j}\right)^{-\alpha} \Bigg \vert_{\vx = \vmu} = -2\alpha, &\forall ~i \in \cN.
    \end{align*}
    Substituting the above expressions in \eqref{eqn:h_i partial} and simplifying it yields
    \begin{align*}
        \frac{\partial h_i(\vx)}{\partial x_i} \Bigg\vert_{\vx=\vmu} &= 2\alpha \mu_i - \alpha P_{ii}  -\alpha - 1, &\forall ~i \in \cN, \\
        \frac{\partial h_i(\vx)}{\partial x_j} \Bigg\vert_{\vx=\vmu} &= 2\alpha \mu_i - \alpha P_{ji}  ,      &\forall ~i\neq j \in \cN,
    \end{align*}
    and by rewriting the above in matrix form, we get
    \begin{equation}\label{eqn:srrw jacobian again}
        \mJ(\alpha) = 2 \alpha \vmu \ones^T - \alpha \mP^T - (\alpha + 1)\eye
    \end{equation}
    which is the same as \eqref{eqn:srrw_jacobian}.

    We now prove the eigenvalue result in Lemma \ref{lemma:srrw_jacobian}. For each $\mu_i$, $i \in \{1,\cdots, N\}$, we have
    $$\mJ(\alpha) \vu_i = 2\alpha \vmu \ones^T\vu_i - \alpha {\mP}^T \vu_i - (\alpha + 1)\vu_i.$$
    When $i=N$, that is $\vu_i = \vu_N = \vmu$, then we have $\vmu\ones^T\vu_N = \vu_N\ones^T\vmu = \vmu = \vu_N$, and ${\mP}^T\vu_N = \vu_N$, and so the above equation becomes
    $$\mJ(\alpha) \vu_N = (2\alpha - \alpha - \alpha - 1) \vu_N = (-1)\vu_N.$$
    When $i \neq N$, we have $\vmu\ones^T\vu_i = \vmu\vv_N^T\vu_i = 0$, and ${\mP}^T\vu_i = \lambda_i\vu_i$, and we similarly have
    $$\mJ(\alpha) \vu_i = (0 - \alpha\lambda_{i} - \alpha - 1) \vu_i = (\alpha(-1 - \lambda_i)-1)\vu_i.$$
    Similar steps follow when we start from $\vv_i^T\mJ(\alpha)$ instead, and $\zeta_N = -1$, $\zeta_i = (\alpha(-1 - \lambda_i)-1)$ for all $i\in\{1,\cdots,N-1\}$ are the eigenvalues of $\mJ(\alpha)$ with $\vu_i$ and $\vv_i$ being the corresponding left and right eigenvectors. Since $(-\lambda_i - 1) < 0$ for all $i \in \{1,\cdots,N-1\}$, $\zeta_i$'s follow the same ordering as $\lambda_i$'s and this completes the proof.
\end{proof}


\section{Proof of results in Section \ref{section:stochastic_analysis}}\label{appendix:stochastic analysis}

Before providing the proofs of our main results, we reproduce some key assumptions from \cite{delyon2000stochastic} required to apply Theorems 15 and 25 therein, which are the almost sure convergence and the CLT result respectively.

\begin{itemize}
    \item[(A)] $\vh$ is a continuous vector field on $\cO \subset \R^d$; there exists a non-negative $C^1$ function $w$ such that
    \begin{itemize}
        \item $\nabla w(\vx)^T \vh(\vx) \leq 0$ for all $\vx \in \cO$;
        \item the set $S \triangleq \{ \vx ~|~  \nabla w(\vx)^T \vh(\vx) = 0\}$ is such that $w(S)$ has an empty interior.
    \end{itemize}
    \item[(B)] $\vh$ is a continuous vector field on $\cO \subset \R^d$; there exists a non-negative $C^1$ function $w$ and a compact set $\cK \subset \cO$ such that
    \begin{itemize}
        \item $w(\vx) \to \infty$ if $\vx \to \partial\cO$ or $\|\vx\| \to \infty$;
        \item $\nabla w(\vx)^T \vh(\vx) < 0$ if $x \notin \cK$.
    \end{itemize}
    \item[(C)] The general SA iteration given by $\vx_{n+1} = \vx_n + \gamma_{n+1}[\vh(\vx_n) + \eta_{n+1}]$ is \textit{A-stable} \citep[see][Definition 1]{delyon2000stochastic} and $\vx_n$ converges to some limit $\vx^*$. $\vh$ is $C^1$ in some neighborhood of $\vx^*$ with first derivatives being Lipschitz, and the Jacobian matrix of $\vh$ evaluated at $\vx^*$ has all its eigenvalues with negative real part.
    \item[(D)] The step size is decreasing and satisfies 
    \begin{equation*}
        \begin{cases}
            \text{either} \ \ \ \frac{1}{\gamma_n} - \frac{1}{\gamma_{n-1}} \to 0, \\
            \text{or} \ \ \ \ \ \ \ \ \ \gamma_n n \to 1.
        \end{cases}
    \end{equation*}
    \item[(MS)] (Translated to the non-linear Markov chain setting) For every $\vx \in \cO$, there exists a solution $\mQ(\vx)$ to the \textit{Poisson equation}
    \begin{equation*}
        (\eye - \mK[\vx])\mQ[\vx] = \eye - \ones \vpi(\vx)^T.
    \end{equation*}
    For any compact $\cK \subset \cO$,
    $$\sup_{\vx \in \cK, i \in \cN} \| \mQ[\vx]^T\ve_i \|_2 + \| (\eye - \ones\vx^T)^T\ve_i \|_2 < \infty$$
    and there exists a continuous function $\phi_{\cK}$, $\phi_{\cK}(0) = 0$, such that for any $\vx, \vx' \in \cK$,
    $$\sup_{i \in \cN} \| \left[\mK[\vx]\mQ[\vx]\right]_{\cdot,i} - \left[\mK[\vy]\mQ[\vy]\right]_{\cdot,i} \|_2 \leq \phi_{\cK}(\|\vx - \vy\|_2).$$
\end{itemize}

\begin{proof}[\textbf{Proof of Theorem \ref{thm:first order result}}]
    As mentioned in Section \ref{Section:algorithmic setup}, the SRRW iteration \eqref{eqn:srrw iteration} is a stochastic approximation algorithm with controlled Markovian input, with its step size sequence given by $\gamma_n = \frac{1}{n+1}$. To prove the almost sure convergence, we show that assumptions (A), (B) and (MS) in \cite{delyon2000stochastic} are satisfied, and then, under \ref{assu:boundedness}, apply Theorem 15 therein. 
    As a result of Proposition \ref{prop:uniqueness} and Lemma \ref{lemma:strict_lyapunov}, the set of fixed points, which in our case is the singleton $\{ \mu \} \subset \Int(\Sigma)$, and the Lyapunov function $w:\Int(\Sigma) \to [0,\infty)$ as defined in \eqref{eqn:lyapunov_function} satisfy assumptions (A) and (B) in \cite{delyon2000stochastic}.

    Since $\mK[\vx]$ is irreducible for all $\vx \in \Int(\Sigma)$, the semigroup $\{ e^{t(\mK[\vx] - \eye)} \}_{t \geq 0}$ of the related CTMC kernel $\mK[\vx] - \eye$ converges exponentially towards $\ones\vpi(\vx)^T$ (geometric ergodicity). Thus, for all $\vx \in \Int(\Sigma)$, or equivalently for all $\vx \in \cK$ for any compact $\cK \subset \Int(\Sigma)$, the matrix
    \begin{equation}\label{eqn:poisson_1}
        \mQ[\vx] = \int_{0}^{\infty} \left( e^{t(\mK[\vx] - \eye)} - \ones \vpi(\vx)^T \right)dt
    \end{equation}
    is well defined. Moreover, it solves the Poisson equation; that is
    \begin{align*}
        (\eye - \mK[\vx]) \mQ[\vx]  &= \int_{0}^{\infty} \left( (\eye - \mK[\vx])e^{-t(\eye - \mK[\vx])} - (\eye - \mK[\vx])\ones \vpi(\vx)^T \right)dt \\
                                &= \int_{0}^{\infty} (\eye - \mK[\vx])e^{-t(\eye - \mK[\vx])}dt =  \left(\eye-e^{t(\mK[\vx] - \eye)}\right)\Big|_0^\infty = \eye - \ones \vpi(\vx)^T
    \end{align*}
    where the second equality is because $\mK[\vx]\ones = \ones$, and the last inequality is because $-e^{t(\mK[\vx] - \eye)}$ the semi-group operator of an ergodic CTMC, which goes to $\ones \vpi(\vx)^T$ as $t \to \infty$. The solution of the Poisson equation $\mQ[\vx]$, as well as the state dependent update (matrix) $\eye - \ones \vx^T$, have bounded entries for all $\vx\in\cK \subset \Int(\Sigma)$, which implies that
    $$\sup_{\vx \in \cK, i \in \cN} \| \mQ[\vx]^T\ve_i \|_2 + \| (\eye - \ones\vx^T)^T\ve_i \|_2 = \| \mQ[\vx]_{\cdot,i} \|_2 + \| \bfdelta_i - \vx \|_2 < \infty.$$
    Moreover, since $\mK[\vx]$ and $\mQ[\vx]$ are continuous at every $\vx\in\Int(\Sigma)$, they are also Lipschitz in any compact $\cK\subset \Int(\Sigma)$. Thus for each $\cK$, there exists a constant $C_{\cK}$ such that for any $\vx,\vy \in \cK$,
    $$\sup_{i \in \cN} \| \left[\mK[\vx]\mQ[\vx]\right]_{\cdot,i} - \left[\mK[\vy]\mQ[\vy]\right]_{\cdot,i} \|_2 \leq C_{\cK}\|\vx - \vy\|_2.$$
    With this, we satisfy the assumption (MS), and the result follows by application of Theorem 15 in \cite{delyon2000stochastic}.
\end{proof}

\begin{proof}[\textbf{Proof of Theorem \ref{thm:second order result}}]
    We apply Theorem 25 in \cite{delyon2000stochastic} to prove \eqref{eqn:CLT of iterates}, and then analyzing the form of the resulting co-variance matrix to prove \eqref{eqn:covariance matrix}. Applying Theorem 25 in \cite{delyon2000stochastic} firstly involves checking that assumptions (C), (D) and (MS) therein are satisfied. Since we already showed that (MS) is satisfied while proving Theorem \ref{thm:first order result}, we focus on proving (C) and (D).

    Our choice of step size, $\gamma_n = \frac{1}{n+1}$ satisfies $\sum_{n\in\N_0} \gamma_n = \infty$ and $\sum_{n \in \N_0} \gamma_n^2 < \infty$. Besides,
    $$\gamma_n - \gamma_{n+1} = \frac{1}{n+1} - \frac{1}{n+2} = \frac{(n+2) - (n+1)}{(n+1)(n+2)} \leq \frac{1}{(n+1)^2}.$$
    Then, we have $\sum_{n \in \N_0} | \gamma_n - \gamma_{n+1} | \leq \sum_{n \in \N_0} {1}/{(n+1)^2} < \infty$, and therefore (D) in \cite{delyon2000stochastic} is satisfied.

    Since our mean field $F(\vx)$ is differentiable everywhere in $\Int(\Sigma)$ and therefore continuous, it is Lipschitz for all compact subsets $\cK \subset \Int(\Sigma)$, and thus also Lipschitz over some neighborhood of $\vmu \in \Int(\Sigma)$. Moreover linear stability of  $\vmu$ follows from the global stability of shown in Theorem \ref{thm:global_convergence_ode}, and all eigenvalues of $\mJ(\alpha)$ have negative real parts as shown in Lemma \ref{lemma:srrw_jacobian}. This ensures that (C) in \cite{delyon2000stochastic} is satisfied.

    In order to obtain \eqref{eqn:CLT of iterates} via Theorem 25 in \cite{delyon2000stochastic}, it remains to show that there exists a bounded $\mW \in \R^{N\times N}$ which solves 
    \begin{equation*}
        [\eye - \mK[\vmu]]\mW = [\eye - \ones \vmu^T]\mS,
    \end{equation*}
    where $\mS = \mQ[\vmu]\mQ[\vmu]^T - \mK[\vmu]\mQ[\vmu]\mQ[\vmu]^T\mK[\vmu]^T$, and $\mQ[\cdot]$ is as defined in \eqref{eqn:poisson_1}. By setting $\mW = \mQ[\vmu]\mS$, we can check that $[\eye - \mK[\vmu]]\mW = [\eye - \mK[\vmu]]\mQ[\vmu]\mS = [\eye - \ones \vpi(\vmu)^T]\mS = [\eye - \ones \vmu^T]\mS$, where the second equality comes from the first equation (poisson equation) in the (MS) condition by setting $\vx = \vmu$, and the last equality comes from the $\vmu$ being the unique solution to the fixed point equation $\vpi(\vx)=\vx$. The boundedness of $\mW$ is ensured by that of $\mQ[\vmu]$ and $\mK[\vmu]$. This completes the proof of \eqref{eqn:CLT of iterates}, and we will now show \eqref{eqn:covariance matrix}.

    From Theorem 25 in \cite{delyon2000stochastic}, the matrix $\mV(\alpha)$ solves the Lyapunov equation $\mU + (\mJ(\alpha) + \eye/2)\mV(\alpha)+\mV(\alpha)(\mJ(\alpha) + \eye/2)^T$, and is therefore given by 
    \begin{equation}\label{eqn:lyapunov_sol}
        \mV(\alpha) = \int_0^\infty e^{t(\mJ(\alpha)+\eye/2)} \mU e^{t(\mJ(\alpha)+\eye/2)^T}dt.
    \end{equation} From Theorem 25 in \cite{delyon2000stochastic}, the matrix $\mU$ is given by $\mU = \ones\vmu^T\mS$ and is called the asymptotic (sampling) co-variance for function $\vf:\cN\to\R^d$, with $\vf(i) = \bfdelta_i - \vmu$ for all $i\in \cN$.\footnote{The map $\vf$ can be thought of as a function to be sampled at $X_{n+1}$ and corresponds to the update rule evaluated at $\vx_n = \vmu$, that is, $\mF_{X_{n+1},\cdot} = \vf(X_{n+1}) = \bfdelta_{X_{n+1}} - \vmu$.} To obtain the closed form of $\mU$ in terms of eigenvalues and eigenvectors of the probability matrix $\mP$, we first provide  Lemma 6.3.7 in \cite{bremaudmarkov2020}), but re-written for vector-valued functions instead.
    \begin{lemma}[Lemma 6.3.7 in \cite{bremaudmarkov2020}]
        Let $\{X_k\}_{k\geq 0}$ be an ergodic Markov chain (reversible) with finite state space $[n]$, transition probability matrix $\mP$ and stationary distribution $\vmu$. For any function $\vf:[n]\to\R^d$, we have
        \begin{equation}
            \mU(\vf) = 2\mF^T \text{diag}(\vmu) \mZ \mF - \mF^T \text{diag}(\vmu) \mF - \mF^T \vmu \vmu^T \mF,
        \end{equation}
        where $\mU(\vf)$ is the asymptotic (sampling) covariance matrix for function $\vf$, matrix $\mF$ is given by $\mF \triangleq [\vf(1), \cdots, \vf(n)]^T$, and
        $\mZ \triangleq (\eye - \mP + \ones \vmu^T)^{-1}$.
        Moreover, since $\mP$ is reversible, we have (equation 6.34 in \cite{bremaudmarkov2020})
        \begin{equation}
        \mU(\vf) = \sum \limits_{k=1}^{n-1} \frac{1+\lambda_k}{1-\lambda_k} \mF^T\vu_k \vu_k^T\mF,
        \end{equation}
        where $\vu_k$ are the left eigenvectors of $\mP$ with $\vu_n = \vmu$.
    \end{lemma}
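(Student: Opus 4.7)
The plan is to establish the lemma in three stages: (i) express $U(\vf)$ as a stationary series of lagged covariances, (ii) resum the series using the fundamental matrix $\mZ$, and (iii) use the spectral decomposition of the reversible kernel to obtain the eigendecomposition form.

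First, I would rely on the standard Markov-chain CLT for ergodic reversible chains (e.g.\ via Kipnis--Varadhan) to write
\begin{equation*}
    U(\vf) \;=\; \sum_{t \in \Z} \mathrm{Cov}_{\vmu}(\vf(X_0), \vf(X_{|t|})).
\end{equation*}
Direct computation under stationarity gives $\mathrm{Cov}_{\vmu}(\vf(X_0), \vf(X_t)) = \mF^T \mD_\vmu (\mP^t - \ones\vmu^T) \mF$ for $t \geq 0$, and reversibility (i.e.\ $\mD_\vmu \mP = \mP^T \mD_\vmu$) forces the $t < 0$ terms to equal their positive-lag counterparts. Splitting off $t = 0$ and doubling the positive-lag sum yields
\begin{equation*}
    U(\vf) \;=\; \mF^T(\mD_\vmu - \vmu\vmu^T)\mF \;+\; 2\sum_{t \geq 1} \mF^T \mD_\vmu (\mP^t - \ones\vmu^T) \mF.
\end{equation*}

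Next, I would invoke the identity $\mZ = \eye + \sum_{t \geq 1}(\mP^t - \ones\vmu^T)$, which follows from the spectral-gap bound $\|\mP^t - \ones\vmu^T\| = O(|\lambda_{N-1}|^t)$ together with a direct check that $(\eye - \mP + \ones\vmu^T)$ inverts the right-hand side. Substituting into the previous display produces the first claimed form $U(\vf) = 2\mF^T \mD_\vmu \mZ \mF - \mF^T \mD_\vmu \mF - \mF^T \vmu\vmu^T \mF$. For the eigendecomposition form, I would write $\mP^t - \ones\vmu^T = \sum_{k=1}^{N-1} \lambda_k^t \vv_k \vu_k^T$ using the reversible spectral decomposition, then apply $\mD_\vmu \vv_k = \vu_k$ to obtain $\mD_\vmu(\mP^t - \ones\vmu^T) = \sum_{k=1}^{N-1} \lambda_k^t \vu_k \vu_k^T$, and finally use the geometric sum $\sum_{t \in \Z} \lambda_k^{|t|} = (1+\lambda_k)/(1-\lambda_k)$ to collapse the double series.

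The main obstacle I expect is justifying the termwise manipulation of the infinite matrix series: the raw series $\sum_t \mP^t$ diverges and only the centred series $\sum_t(\mP^t - \ones\vmu^T)$ converges, so I would need to invoke the spectral-gap bound at each step to interchange sums with matrix products and to make sense of $\mZ$ as the resolvent-type inverse. The reversibility hypothesis is essential here not only for applying the Markov-chain CLT cleanly but also for converting the non-symmetric outer product $\vv_k \vu_k^T$ into the symmetric $\vu_k \vu_k^T$ after left-multiplication by $\mD_\vmu$ — which is what makes $U(\vf)$ manifestly positive semidefinite in the final form.
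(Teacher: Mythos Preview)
Your proposal is correct and follows the standard route to this result. Note, however, that the paper does \emph{not} actually prove this lemma: it is stated inside the proof of Theorem~\ref{thm:second order result} purely as a citation, introduced with ``we first provide Lemma 6.3.7 in \cite{bremaudmarkov2020}, but re-written for vector-valued functions instead,'' and then immediately applied. There is no proof in the paper to compare against.

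That said, your argument is sound. The three-stage plan --- (i) express $U(\vf)$ as the two-sided lagged covariance series, (ii) resum via the fundamental matrix $\mZ = \eye + \sum_{t\geq 1}(\mP^t - \ones\vmu^T)$, (iii) diagonalise using the reversible spectral decomposition and the identity $\mD_\vmu \vv_k = \vu_k$ --- is exactly the standard derivation found in the cited reference. Your identification of the key technical point (that only the centred series converges, controlled by the spectral gap) and the role of reversibility (symmetrising $\vv_k\vu_k^T$ into $\vu_k\vu_k^T$ after premultiplication by $\mD_\vmu$) are both accurate. One minor remark: in the lemma statement as printed, $\text{diag}(\vpi)$ appears to be a typo for $\text{diag}(\vmu)$; you have silently corrected this by writing $\mD_\vmu$ throughout, which is the right call.
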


    For our update rule \eqref{eqn:main ode}, we have $\mF = \eye - \ones\vmu^T$. The asymptotic (sampling) covariance matrix $\mU \triangleq \mU(\vf)$ is the same for all $\alpha \geq 0$, since it is only dependent on $\mF$ and the transition kernel $\mP = \mK[\vmu]$. Thus, we can write down $\mU$ as

    \begin{equation}\label{eqn:spectral form U}
        \mU = \sum \limits_{k=1}^{N-1} \frac{1+\lambda_k}{1-\lambda_k} (\eye - \vmu \ones^T)\vu_k \vu_k^T(\eye -  \ones\vmu^T) = \sum \limits_{k=1}^{N-1}\frac{1+\lambda_k}{1-\lambda_k}\vu_k \vu_k^T
    \end{equation}
    where the last equality is because $\vu_N=\vmu$ and $\vv_N=\ones$, and since $\vu_i^T\vv_j = 0$ for all $i\neq j$.

    Lemma \ref{lemma:srrw_jacobian} allows us to write down the spectral decomposition of $e^{t(\mJ(\alpha) + \eye/2)}$ as
    \begin{equation}\label{eqn:decomposition of exp 2}
        e^{t(\mJ(\alpha)+\eye/2)} = \sum_{i \in \cN} e^{t (\zeta_i+1/2)}\vu_i\vv_i^T = e^{-1/2}\vu_N\vv_N^T + \sum_{i=1}^{N-1} e^{t(\alpha(-1-\lambda_i) -1/2)}\vu_i\vv_i^T.
    \end{equation}

    Substituting \eqref{eqn:spectral form U} and \eqref{eqn:decomposition of exp 2} in \eqref{eqn:lyapunov_sol}, we get
    \begin{align*}
        \mV(\alpha) &=\!\! \int_0^\infty \!\! \left(\!e^{-\frac{1}{2}}\vu_N\vv_N^T \!+\!\! \sum_{i=1}^{N-1} e^{t(\alpha(-1-\lambda_i) -\frac{1}{2})}\vu_i\vv_i^T \!\right)\!\!\left(\!\sum_{k=1}^{N-1}\frac{1+\lambda_k}{1-\lambda_k}\vu_k \vu_k^T\!\right) \! \!\left(\!e^{-\frac{1}{2}}\vu_N\vv_N^T \!+ \!\! \sum_{i=1}^{N-1} e^{t(\alpha(-1-\lambda_i) -\frac{1}{2})}\vu_i\vv_i^T \! \right)^{\!\!\!\!T}\! dt \\
        &=\!\!  \int_0^\infty \left(\sum_{i=1}^{N-1} e^{t(\alpha(-1-\lambda_i) -1/2)}\vu_i\vv_i^T \right)\left(\sum_{k=1}^{N-1}\frac{1+\lambda_k}{1-\lambda_k}\vu_k \vu_k^T\right) \left(\sum_{i=1}^{N-1} e^{t(\alpha(-1-\lambda_i) -1/2)}\vu_i\vv_i^T \right)^T dt \\
        &=\!\!  \int_0^\infty \sum_{i=1}^{N-1} e^{2t(\alpha(-1-\lambda_i) -1/2)}\frac{1+\lambda_i}{1-\lambda_i}\vu_i\vu_i^T dt 
        = \sum_{i=1}^{N-1} \frac{1}{2\alpha (1 + \lambda_i) + 1} \cdot \frac{1+\lambda_i}{1-\lambda_i}\vu_i\vu_i^{T}
    \end{align*}
    where the first three equalities follow from orthonormality of the left and right eigenvectors. The last equality comes from swapping the summation with the integral,\footnote{From Fubini's theorem, the order of summation and integrals can be swapped if the summands are all positive terms - which stands true for our case.} and evaluating the latter. This completes the proof.
\end{proof}

\begin{proof}[\textbf{Proof of Corollary \ref{cor:avr ordering}}]
    For any $\alpha > 0$ and any vector $\vx \in \R^N$, we have
    $$\vx^T\mV(\alpha)\vx = \sum_{i=1}^{N-1}\frac{1}{2\alpha (1 + \lambda_i) + 1} \cdot \frac{1+\lambda_i}{1-\lambda_i} \vx^T\vu_i\vu_i^T\vx  < \frac{1+\lambda_i}{1-\lambda_i} \vx^T\vu_i\vu_i^T\vx = \vx^T\mV(0)\vx,$$
    where the inequality is because $\lambda_i \in (-1,1)$, and as a result, $\alpha(-\lambda_i - 1) < 0$ for all $i \in \{1,\cdots,N-1\}$. In fact, the ordering is monotone in $\alpha>0$. This completes the proof.
\end{proof}

\begin{proof}[\textbf{Proof of Corollary \ref{corollary:lp_norm}}]
    Note that every $L^p$ norm $\| \cdot \|_p$ is a continuous and bounded function on $\Sigma$. The implication then follows by a direct application of the continuous mapping theorem \citep[see][Theorem 2.3]{van2000asymptotic}  to the CLT shown in Theorem \eqref{thm:second order result}.
\end{proof}

\begin{proof}[\textbf{Proof of Corollary \ref{cor:estimator and mse clt}}]
    Recall that in Section \ref{Section:algorithmic setup}, we had redefined $\vx_n$ as $\vx_n \triangleq \frac{1}{n+1}(\vx_0 + \sum_{k=1}^n \bfdelta_{X_k})$, where $\vx_0 = \vnu \in \Int(\Sigma)$ was set a-priori. We redefine the actual empirical distribution of the process as $\hat \vx_n \triangleq \frac{1}{n}\sum_{k=1}^n \bfdelta_{X_k}$, and rewrite $\vx_n$ as
    \begin{equation}
        \vx_n = \frac{1}{n+1}\vx_0 + \frac{n}{n+1}\hat \vx_n
    \end{equation}

    It is enough to show the almost sure convergence and CLT result for $\hat \vx_n$, since the result for $\psi_n(g) = \vg^T\hat\vx_n$ follows from the boundedness assumption for $\vg$ (which ensures square summability). From Theorem \ref{thm:first order result}, and because $\vx_0/n+1 \to \0$, as $n\to \infty$, we have that $\hat{\vx}_n n/(n+1) \to \vmu$ almost surely. Multiplying $\hat{\vx}_n \cdot n/(n+1)$ by $(n+1)/n$,  and since $(n+1)/n \to 1 < \infty$, we obtain that $\hat{\vx}_n \to \vmu$ almost surely.

    From the CLT in Theorem \ref{thm:second order result}, we have $\sqrt{n}(\vx_n - \vmu) \to \cN(0,\mV(\alpha))$ in distribution. We will again break down $\vx_n$ to obtain 
    $$\frac{n\sqrt{n}}{n+1}\hat \vx_n - \sqrt{n}\vmu \xrightarrow[n\to\infty]{dist} \cN(0,\mV(\alpha)),$$ 
    where the $\vx_0$ term is not present since $ \frac{\sqrt{n}}{n+1}\vx_0 \to \0$. We multiply the above by $(n+1)/n$ to get
    $$\frac{n+1}{n} \left( \frac{n\sqrt{n}}{n+1}\hat \vx_n - \sqrt{n}\vmu\right) \xrightarrow[n\to\infty]{dist} \cN(0,\mV(\alpha)),$$
    where the convergence holds because $n+1/n \to 1<\infty$ and by applying Slutsky's theorem. Additionally, observe that
    $$\frac{n+1}{n} \left( \frac{n\sqrt{n}}{n+1}\hat \vx_n - \sqrt{n}\vmu\right) = \sqrt{n}(\hat{\vx}_n - \vmu) - \frac{1}{n}\vmu,$$
    where the term $\frac{1}{n}\vmu \to \0$, implying that
    $\sqrt{n}(\hat{\vx}_n - \vmu) \xrightarrow[n\to\infty]{dist} \cN(0,\mV(\alpha)).$
    Now we left multiply $\hat{\vx}_n$ by vector $\vg^T$ and use continuous mapping theorem such that
    $$\sqrt{n}(\psi_n(g) - \vg^T\vmu) \xrightarrow[n\to\infty]{dist} \cN(0,\vg^T\mV(\alpha)\vg).$$
    This completes the proof.
\end{proof}

\begin{proof}[\textbf{Proof of Corollary \ref{cor:avr bound}}]
    For $\mV(\alpha)$ as given by \eqref{eqn:covariance matrix} for any $\alpha \geq 0$, we have
    \begin{align*}
        \vg^T\mV(\alpha)\vg &= \sum_{i=1}^{N-1} \frac{1}{2\alpha (1 + \lambda_i) + 1} \cdot \frac{1+\lambda_i}{1-\lambda_i}(\vg^T\vu_i)^2 
        = \E[f(\Lambda)g(\Lambda)]\sum_{i\in\cN}(\vg^T\vu_i)^2,
    \end{align*}
    where $\Lambda$ is the random variable as defined in the corollary statement, $f(x)\triangleq\frac{1}{2\alpha(1+x)+1}$ and $g(x)\triangleq\frac{1+x}{1-x}$.
    Similarly, for $\alpha = 0$, we have
    $$\vg^T\mV(0)\vg = \sum_{i=1}^{N-1} \frac{1+\lambda_i}{1-\lambda_i}(\vg^T\vu_i)^2 = \E[g(\Lambda)] \sum_{i\in\cN}(\vg^T\vu_i)^2.$$
    Dividing the two equations gives us
    $$\frac{\vg^T\mV(\alpha)\vg}{\vg^T\mV(0)\vg} = \frac{\E[f(\Lambda)g(\Lambda)]}{\E[g(\Lambda)]} \leq \frac{\E[f(\Lambda)]\E[g(\Lambda)]}{\E[g(\Lambda)]} = \E[f(\Lambda)] = E\left[\frac{1}{2\alpha(1+\Lambda)+1}\right],$$
    where the inequality comes by application of the FKG inequality \citep[see][]{fortuin1971correlation} to the numerator, where $f(\Lambda)$ and $g(\Lambda)$ are positive functions of $\Lambda$, and decreasing and increasing in $\Lambda$ respectively. This completes the proof.
\end{proof}




\section{Scale invariance of SRRW transition probabilities with polynomial form of $r_{\mu_i}(x_i)$} \label{appendix:discussion on polynomial}

In Section \ref{section:introduction}, we briefly mentioned that the polynomial form of $r_{\mu_i}(x_i)$ as introduced in \eqref{eqn:polynomial_repellence} for all $i\in\cN$ is the only possible choice where transition probabilites $\mK_{ij}[\vx]$ are \textit{scale invariant} - the entries of the target distribution $\mu_i$ and empirical measure $x_i$ only need to be known up to a constant multiple for neighboring nodes of the random walker's position at each time step. This is equivalent to saying that for any $\mu_i \in \R$ (for any $x_i \in \R$), we have $r_{\mu_i}(Cx_i) = g(C)r_{\mu_i}(x_i)$ (we have $r_{C\mu_i}(x_i) = g(C)r_{C\mu_i}(x_i)$) for some function $g:\R\to\R$. Indeed, observe that for any $i,j\in\cN$ and $r_{\mu_i}(x_i)$ as discussed, we have
$$K_{ij}[C\vx] = \frac{P_{ij}r_{\mu_j}(Cx_j)}{\sum_k P_{ik}r_{\mu_k}(Cx_k)} = \frac{P_{ij}g(C)r_{\mu_j}(x_j)}{\sum_k P_{ik}g(C)r_{\mu_k}(x_k)} = \frac{P_{ij}r_{\mu_j}(x_j)}{\sum_k P_{ik}r_{\mu_k}(x_k)} = K_{ij}[\vx].$$
The following result shows such scale-invariance is only a property of polynomial choice of $r_{\mu_i}(x_i)$ for all $i\in\cN$.
\begin{proposition}\label{prop:scale_invariance}
    For any function $r:\R \to \R$, there exists a function $g:\R \to \R$ such that 
    \begin{equation}\label{eqn:scale_invariance}
        r(C_1x) = g(C_1)r(x)
    \end{equation}
    for any $C_1 \in \R$ if and only if $r(x)$ is of the form $r(x) = C_2x^\alpha$ for any $C_2, \alpha \in \R$.
\end{proposition}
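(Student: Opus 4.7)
The ``if'' direction is immediate: for $r(x) = C_2 x^\alpha$ we simply compute $r(C_1 x) = C_2 (C_1 x)^\alpha = C_1^\alpha \cdot C_2 x^\alpha = g(C_1) r(x)$ with $g(C_1) = C_1^\alpha$, so it suffices to focus on the ``only if'' direction.

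For the nontrivial direction, I would first use the constraint at a reference point to pin down $g$ in terms of $r$. Specifically, setting $x = 1$ in \eqref{eqn:scale_invariance} gives $r(C_1) = g(C_1)\, r(1)$. In the SRRW context $r$ is strictly positive, so $r(1) \neq 0$ and we may divide to obtain $g(C_1) = r(C_1)/r(1)$. Substituting this back into \eqref{eqn:scale_invariance} yields the symmetric multiplicative identity
\begin{equation*}
    r(C_1 x)\, r(1) \,=\, r(C_1)\, r(x), \qquad \forall\, C_1, x > 0.
\end{equation*}

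Next I would convert this multiplicative Cauchy-type equation into an additive one by a logarithmic change of variables. Define $\phi(t) \triangleq r(e^t)/r(1)$ for $t \in \R$; then the identity above becomes $\phi(s+t) = \phi(s)\,\phi(t)$ for all $s,t \in \R$, which is the classical exponential Cauchy functional equation. Since $r$ is positive, $\phi$ is positive, and taking logarithms gives $\psi(s+t) = \psi(s) + \psi(t)$ with $\psi \triangleq \log \phi$. Under the mild regularity of $r$ implicit in our SRRW setup (positivity together with the monotonicity/continuity of $r_{\mu_i}(\cdot)$ used throughout Section \ref{section:introduction}), $\psi$ is measurable and hence linear: $\psi(t) = \alpha t$ for some $\alpha \in \R$. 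Back-substituting yields $\phi(t) = e^{\alpha t}$, i.e., $r(e^t) = r(1)\, e^{\alpha t}$, and reverting to $x = e^t$ gives $r(x) = C_2\, x^\alpha$ with $C_2 = r(1)$.

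The main obstacle is the last step: the Cauchy equation $\psi(s+t) = \psi(s) + \psi(t)$ admits wildly pathological (non-measurable) solutions in the absence of any regularity. I expect this to be painless in our setting because $r$ is positive and the SRRW construction in \eqref{eqn:general srrw kernel}--\eqref{eqn:polynomial_repellence} tacitly requires $r_{\mu_i}$ to be continuous (indeed monotone decreasing), which more than suffices to exclude pathological solutions and force the polynomial form. If one wished to state the proposition without any regularity hypothesis, one could replace ``$r(x) = C_2 x^\alpha$'' by ``$r(x) = C_2 x^\alpha$ up to an additive subgroup of $(\R,+)$ via Hamel bases,'' but this is not the intended reading in the context of the paper.
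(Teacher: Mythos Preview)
Your proof is correct, but it follows a genuinely different route from the paper's own argument. The paper assumes (implicitly) that $r$ is differentiable: it differentiates \eqref{eqn:scale_invariance} with respect to $x$ to get $C_1 r'(C_1 x) = g(C_1) r'(x)$, divides the original identity by this to obtain $f(C_1 x) = C_1 f(x)$ for $f(x) \triangleq r(x)/r'(x)$, and then differentiates once more to conclude that $f'$ is constant, hence $f$ is linear through the origin, which forces $r(x) = C_2 x^{\alpha}$ via the simple ODE $r'/r = \alpha/x$. Your route instead pins down $g$ by evaluating at $x=1$, reduces to the multiplicative Cauchy equation $\phi(s+t)=\phi(s)\phi(t)$ via the logarithmic change of variable, and invokes measurability/continuity to rule out pathological solutions.

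Both arguments need a regularity hypothesis that the proposition does not explicitly state. The paper's approach requires differentiability (indeed $C^2$, since it differentiates twice), while yours requires only continuity or measurability plus positivity of $r$; in that sense your argument is strictly more general and also makes the hidden assumption more transparent. The paper's proof, on the other hand, is slightly more self-contained in that it does not appeal to the classification of solutions to Cauchy's equation. Your remark about restricting to $x>0$ and relying on the monotonicity/positivity built into the SRRW construction is apt and matches the intended reading.
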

\begin{proof}
    The reverse implication of the statement is true for any polynomial function $r:\R \to \R$ of type $r(x) = C_2x^\alpha$ with $g(C_1) = C_1^\alpha$. We now prove the forward direction. Differentiating \eqref{eqn:scale_invariance} on both sides, we get
    \begin{equation}\label{eqn:scale_invariance_derivative}
        C_1 r'(C_1 x) = g(C_1) r'(x)
    \end{equation}
    and dividing the two sides of \eqref{eqn:scale_invariance} by those of \eqref{eqn:scale_invariance_derivative} yields
    \begin{equation}\label{eqn:scale_invariance_divide}
        \frac{r(C_1 x)}{r'(C_1 x)} = C_1\frac{r(x)}{r'(x)}.
    \end{equation}
    Setting $f(x) \triangleq r(x)/r'(x)$ for all $x \in \R$, substituting in \eqref{eqn:scale_invariance_divide}, and then differentiating once more gives us
    \begin{equation}
        f'(C_1 x) = f'(x)
    \end{equation}
    for all $C_1,x\in\R$. This is only possible if $f:\R \to \R$ is a linear function, which by its definition is only possible if $r(x)$ is a polynomial function of the type $r(x) = C_2x^\alpha$, for some $C_2,\alpha \in \R$.
\end{proof}


\section{Discussion on Assumption \ref{assu:boundedness}} \label{appendix:discussion on boundedness}

We first describe the modified stochastic approximation procedure with restarts of the process upon hitting the boundaries of a sequence of (expanding) compact subsets of $\Int(\Sigma)$. Define a sequence of compact subsets $\{\cK_n\}_{n\in\N_0}$ of $\Int(\Sigma)$ such that $\cK_n \subset \cK_{n+1}$ for all $n\in\N_0$, and $\cup_{n\in\N_0} \cK_{n} = \Int(\Sigma)$. Let $\{ \bar \bfgamma^m \}_{m\in\N_0}$ be a family of step size sequences, where $\bar \bfgamma^m \triangleq \{\gamma_{k,m}\}_{k \in \N_0}$ for all $m\in\N_0$, with $\gamma_{k,m} \triangleq \gamma_{k+m,0} \triangleq 1/(k+m+2)$, for all $k,m\in \N_0$.

Setting $\vx_0 \in \cK_0$, where $\cK_0$ is the \textit{active set}, and setting the step-size sequence to be $\bar \bfgamma^0$, we run the iteration
\begin{equation}\label{eqn:SA_iteration_truncation_1}
    \vx_{n+1} = (1-\gamma_{n,m})\vx_n + \gamma_{n,m}\bfdelta_{X_{n+1}},
\end{equation}
where $m=0$ and $X_{n+1} \sim K_{X_n,\cdot}[\vx_n]$, until the iterate leaves the active set $\cK_0$. Upon this event (also called a truncation), we `expand' the active set by incrementing its index and setting it to be $\cK_1$, set the new step size sequence to be $\bfgamma^1$, and restart the iteration \eqref{eqn:SA_iteration_truncation_1} with these incremented active sets and step size sequences, and with a new initial point $\vx_0\in\cK_0$, in an \textit{i.i.d.} manner upon each restart. This process of truncation and restarts keeps repeating, and as part of the proof of our first order convergence results, we show that the number of restarts is always finite $\prob_{\vx_0,X_0}$ - almost surely.

This SA procedure with truncations can also be written more comprehensively. Let the step-size sequence be $\bar \bfgamma \triangleq \{ \gamma_k \}_{k \in \N_0}$, where $\gamma_k \triangleq \gamma_{k,0} = 1/(k+2)$. At each step $n\in\N_0$, let $\varsigma_n,\kappa_n$ and $\nu_n$ be \textit{counters} keeping track of the step-size index, the index of the active set, and the number of iterations since the last truncation event, respectively. With $\vx_0 \in \cK_0$, $X_0 \in \cN$ as before, and $\varsigma_0=0$, $\kappa_0=0$, and $\nu_0 = 0$, the SA procedure with truncations can be written as
\begin{equation}\label{eqn:SA_iteration_trucnation_2}
    \begin{split}
        &\text{set:} ~~~~~~~~~~~\vx_{n+\frac{1}{2}} = \vx_n + \gamma_{\varsigma_n + 1}(\bfdelta_{X_{n+1}} - \vx_n), \\
        &\text{update:} ~~~~~(\vx_{n+1},\varsigma_{n+1},\kappa_{n+1},\nu_{n+1}) = 
        \begin{cases}
            (\vx_{n+\frac{1}{2}},\varsigma_{n}\!+\!1,\kappa_{n},\nu_{n}\!+\!1),~~\text{if}~ \vx_{n+\frac{1}{2}} \in \cK_{\kappa_n} \\
            (\vx_{0},\varsigma_{n}\!+\!1-\!\nu_n,\kappa_{n}\!+\!1,0),~~\text{otherwise}.
        \end{cases}
    \end{split}
\end{equation}

\begin{proposition}\label{prop:truncation}
    The iterates $\{\vx_n\}_{n \geq 0}$ of \eqref{eqn:SA_iteration_trucnation_2} satisfy assumption \ref{assu:boundedness}.
\end{proposition}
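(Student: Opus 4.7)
The plan is to show that, $\prob_{\vx_0,X_0}$-almost surely, only finitely many truncations occur; once this is in hand, \ref{assu:boundedness} follows immediately, since from some finite restart index $\kappa^\star$ onward the iterate is trapped inside the compact active set $\cK_{\kappa^\star} \subset \Int(\Sigma)$, and the finitely many earlier segments of the trajectory live in the finite union $\cup_{m \leq \kappa^\star}\cK_m$, still compact in $\Int(\Sigma)$.

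To rule out infinitely many truncations I would argue by contradiction. Let $\tau_m$ denote the $m$-th truncation step, and suppose on a set of positive probability that $\tau_m < \infty$ for all $m$ (equivalently $\kappa_n \to \infty$). A direct unrolling of the rule $\varsigma_{n+1} = \varsigma_n + 1 - \nu_n$ shows that on each truncation the step-size counter rewinds only to its value at the \emph{previous} truncation, so it effectively increments by one across each restart; hence the first step-size available after the $m$-th restart is of order $1/(m+3)$ and goes to zero as $m \to \infty$, and the entire tail sequence used in that run is uniformly small. Now I would invoke a standard SA shadowing estimate: the Poisson-equation solution $\mQ[\vx]$ built in the proof of Theorem \ref{thm:first order result} is bounded and Lipschitz on compact subsets of $\Int(\Sigma)$, giving the usual martingale-plus-vanishing-remainder decomposition of the Markovian noise \cite{kushner1997,Benaim99,borkar2008}; combined with $\sum_n \gamma_n^2 < \infty$, this yields the following shadowing statement: for every horizon $T$ and every $\epsilon > 0$, there is $M$ such that for all $m \geq M$ the iterate started at $\vx_0 \in \cK_0$ with tail step sizes $\{\gamma_k\}_{k \geq m+1}$ stays within $\epsilon$ of the mean-field ODE trajectory $\vx^\star(\cdot)$ with $\vx^\star(0) = \vx_0$, uniformly over interpolated time $t \in [0,T]$.

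The Lyapunov structure then eliminates all further restarts. By Lemma \ref{lemma:strict_lyapunov} the function $w$ is non-increasing along $\vx^\star(\cdot)$, so the orbit sits inside $\cC \triangleq \{\vx : w(\vx) \leq w(\vx_0)\}$, a compact subset of $\Int(\Sigma)$ because $w$ blows up at $\partial \Int(\Sigma)$. By Theorem \ref{thm:global_convergence_ode}, $\vx^\star(t) \to \vmu \in \Int(\Sigma)$, so the orbit in fact lives inside an $\epsilon$-thickening $\cC_\epsilon$ of $\cC$ that is still at positive distance from $\partial \Int(\Sigma)$. By the exhaustion $\cup_m \cK_m = \Int(\Sigma)$, pick $M'$ with $\cC_\epsilon \subset \cK_{M'}$; once $m$ is large enough that simultaneously $\kappa_m \geq M'$ \emph{and} the shadowing bound above holds, the iterate after the $m$-th restart can never leave $\cK_{\kappa_m} \supset \cK_{M'}$, so no further truncation can occur — contradicting $\kappa_n \to \infty$.

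The main obstacle I expect is the uniform-in-horizon character of the shadowing step: one must control the SA deviation from the ODE over a window $[0,T]$ long enough for $\vx^\star(t)$ to approach $\vmu$ closely enough that $\cC_\epsilon$ is well-defined and contained in some $\cK_{M'}$, not merely over a short Grönwall-type interval. This forces $T = T(\epsilon)$ to be chosen first and then the restart index threshold $M = M(T,\epsilon)$ tuned so that the martingale variation on that window is genuinely small; the technically delicate part is arranging the dependencies so that these two choices are compatible, which is where the telescoped martingale decomposition via $\mQ[\vx]$, the uniform boundedness and Lipschitz continuity of $\mK[\vx]\mQ[\vx]$ on compacts from the proof of Theorem \ref{thm:first order result}, and the summability $\sum_n \gamma_n^2 < \infty$ together do the work. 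Once this uniform shadowing is secured, the Lyapunov sublevel-set comparison closes the argument cleanly.
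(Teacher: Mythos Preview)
Your approach differs from the paper's: the paper does not argue via ODE shadowing at all, but instead recasts \eqref{eqn:SA_iteration_trucnation_2} as a special case of the general truncated SA scheme in \cite{Andrieu2005Stability}, checks their assumptions (A1)--(A4) and the drift conditions (DRI) (using the Lyapunov function $w$ from Lemma~\ref{lemma:strict_lyapunov}, the Poisson-equation solution $\mQ[\vx]$ and its Lipschitz property on compacts, and a step-size condition slightly stronger than square-summability), and then invokes Theorem~5.4 therein to conclude that the number of truncations is almost surely finite. This is a black-box verification rather than a from-scratch argument.

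Your proposal has a genuine gap at the step where you pass from ``the iterate stays within $\epsilon$ of $\vx^\star(\cdot)$ on $[0,T]$'' to ``the iterate after the $m$-th restart can \emph{never} leave $\cK_{\kappa_m}$.'' The finite-horizon shadowing only confines the iterate to $\cC_\epsilon$ for interpolated time $t\in[0,T]$; nothing prevents it from drifting out of $\cC_\epsilon$, and eventually out of $\cK_{\kappa_m}$, for $t>T$. You identify this as ``the main obstacle,'' but the resolution you sketch --- choose $T=T(\epsilon)$ first, then $M=M(T,\epsilon)$ --- still yields only control on $[0,T]$; it does not become uniform in horizon simply by ordering the quantifiers. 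To actually close the loop you would need either an iterated shadowing over consecutive windows $[kT,(k+1)T]$ with summable window-errors, or (more directly) a ``noisy supermartingale'' argument showing that $w(\vx_n)$ plus a controlled fluctuation term is eventually non-increasing along the run, hence the iterates remain in a fixed sublevel set of $w$ for all $n$. The latter is precisely the mechanism inside Theorem~5.4 of \cite{Andrieu2005Stability}; the paper's approach therefore buys you that infinite-horizon control without having to reprove it.
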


Before stating the proof, we make the case for why our SRRW iterates in \eqref{eqn:srrw iteration} are almost surely contained within compact subsets\footnote{The compact subset may depend on the sample path.} of $\Int(\Sigma)$, without the need of any truncation at boundaries of an increasing sequence of compact sets. Let the increasing sequence of compact sets be given by $\cK_n \triangleq \left\{ \vx \in \Int(\Sigma) | x_i \in \left[ \frac{1}{n+M}, 1-\frac{1}{n+M} \right] \right\}$, where $M$ can be any positive real number. As explained in \cite{Andrieu2005Stability}, the requirement that $\vx_0\in\cK_0$ is under the condition that $\cK_0$ is a subset of a region where iterates eventually experience a \textit{positive drift} towards the equilibrium point. However, the uniqueness of the $\vmu \in \Int(\Sigma)$ as the fixed point of our mean-field ODE shown in Lemma \ref{prop:uniqueness}, and the form of our \textit{strict} Lyapunov function as in Lemma \ref{lemma:strict_lyapunov} allows us to get rid of this requirement, allowing $\cK_0$ to be \textit{any} compact subset of $\Int(\Sigma)$. This allows us to choose the parameter $M$ defining the size of $\cK_0$ to be large enough so that $\cK_0 \approx \Int(\Sigma)$ and as a result, $\cK_n \approx \Int(\Sigma)$ for all $n\geq 0$. In this manner, the effect of increasing the truncation set is made nearly redundant, and are likely not the key factor to maintaining stability of iterates $\{\vx_n\}_{n\geq 0}$ of \eqref{eqn:SA_iteration_trucnation_2}.

\begin{proof}[\textbf{Proof of Proposition \ref{prop:truncation}}]
    We prove this result for any step size sequence $\{\gamma_n\}_{n\geq 0}$ satisfying
    \begin{itemize}
        \item[\textbf{B1}] $\sum \limits_{k \geq 0} \gamma_k = \infty$, and $\sum \limits_{k \geq 0} \gamma_k^{2-\epsilon} < \infty$ for some $\epsilon \in (0,1)$.
    \end{itemize}
    Which includes the step size $\gamma_n = \frac{1}{n+1}$ considered in our paper. The above assumption is only slightly stricter than the typical one where $\epsilon = 0$, such as (A4) in \cite{fort2015central} and (D) in \cite{delyon2000stochastic} , and we show that $\epsilon$ need only be very small. Therefore in practice, B1 is nearly indistinguishable from (A4) in \cite{fort2015central} and (D) in \cite{delyon2000stochastic}. For our choice of step-size $\gamma_n = \frac{1}{n+1}$, there exists $\epsilon>0$ small enough such that $(\gamma_n, \epsilon)$ satisfy B1.

    We first introduce a sequence $\{\varepsilon_n\}$ where $\varepsilon_n = 2\gamma_n^\delta$ for some $\delta \in (0,1)$ (and thus $2\gamma_{n} < \varepsilon_{n}$). The condition for acceptance of $\vx_{n+\frac{1}{2}}$ can then be rewritten as requiring $\|\vx_{n+\frac{1}{2}} - \vx_n\| < \epsilon_{\varsigma_n}$ along with $\vx_{n+\frac{1}{2}} \in \cK_{\kappa_n}$, where the former is trivially satisfied since $\|\vx_{n+\frac{1}{2}} - \vx_n\| \leq \gamma_{\varsigma_n}\|\delta_{X_{n+1} - \vx_n} \| \leq 2\gamma_{\varsigma_n} < \varepsilon_{\varsigma_n}$. With this modification, update rule \eqref{eqn:SA_iteration_trucnation_2} is then a special case of the general SA algorithm described in Section 3.2 in \cite{Andrieu2005Stability}. The rest of the proof will then be checking that the assumptions required for applying Theorem 5.4 in \cite{Andrieu2005Stability} are satisfied.

    Assumption (A1) in \cite{Andrieu2005Stability} is satisfied with $V(\vx)$ in \eqref{eqn:lyapunov_function} as the choice of Lyapunov function. A1(i)--(iv) in \cite{Andrieu2005Stability} all follow from Lemma \ref{lemma:strict_lyapunov}, coupled with the fact that $\vx^*=\vmu \in \Int(\Sigma)$ is the unique fixed point; the set of equilibria $\cL = \{\vmu\}$ is a singleton and therefore a closed set with non-empty interior, and the constants $M_0$ and $M_1$ can be any real numbers such that $V(\vd) < M_0 < M_1 < \infty$.

    Assumption (A2) in \cite{Andrieu2005Stability} is naturally satisfied by the construction of our SA algorithm, since $\mK[\vx]$ is irreducible for any $\vx \in \Int(\Sigma)$.

    We now check the set of assumptions (DRI) in \cite{Andrieu2005Stability}. The condition (DRI1) is satisfied by any ergodic Markov chain, and therefore also by $\mK[\vx]$ for any $\vx\in \cK \subset \Int(\Sigma)$ with $V(i) = 1$ for all $i \in \cN$, where $\cK$ is any compact subset of $\Int(\Sigma)$. Condition (DRI2) when translated to our setting requires checking for any compact $\cK,\cK' \subset \Int(\Sigma)$ that
    $$\sup_{\vx \in \cK} \|\bfdelta_{i} - \vx\| \leq C_1, ~~\text{and}~~ \sup_{(\vx,\vy) \in \cK \times \cK'} \|\vy - \vx\|^{-\beta}\|\vy - \vx\| \leq C_1$$
    for some $C\in\R$ and $\beta \in [0,1]$. This clearly holds true with $C_1=2$ and $\beta = 1$. The condition (DRI3) when translated to our setting requires showing for any $(\vx,\vy) \in \cK \times \cK'$ that there exists $C_2 \in \R$ such that
    $$\| \mK[\vx]\vu - \mK[\vy]\vu\| \leq C_2 \|\vu\| \|\vx - \vy\|^\beta, ~~~~\forall \vu \in \R^N, \|\vu\|< \infty.$$
    This is again clearly holds with $\beta=1$ and for some $C_2<\infty$, since $\mK[\cdot]$ is not just continuous but also Lipschitz in any compact subset of $\Int(\Sigma)$. Thus, update rule \eqref{eqn:SA_iteration_trucnation_2} satisfies (DRI) which implies (A3) in \cite{Andrieu2005Stability}.

    In order to satisfy (A4) in \cite{Andrieu2005Stability}, we need to show that the sequences $\{\gamma_n\}_{n \geq 0}$ and $\{\varepsilon_n\}_{n \geq 0}$ are non-increasing,  positive, and satisfy $\sum_{k \geq 0} \gamma_k = \infty$, $\lim_{k \to \infty} \varepsilon_k = 0$, and
    $$\sum_{k \geq 0} \left\{ \gamma_k^2 + \gamma_k \epsilon^a + (\epsilon_k^{-1}\gamma_k)^p \right\} < \infty.$$
    Here, we can set $0<a<\beta=1$ and $p \geq 2$ (these can be deduced from the drift conditions (DRI), as discussed in Section 6 of \cite{Andrieu2005Stability}). By setting $\varepsilon_n = 2\gamma_n^{\delta}$ for some small $\delta \in (0,1)$, the condition boils down to choosing $\alpha \in (0,1)$ such that
    $$\sum_{k \geq 0} \gamma_k^{a\delta + 1} < \infty.\footnote{This implies summability of $\gamma_k^2$ term, while that of the $(\varepsilon_k^{-1}\gamma_k)^p$ is ensured by setting $p\geq2$ to a large enough value.}$$
    By setting $\epsilon = 1-a\delta > 0$ in assumption B1, we can see that (A4) in \cite{Andrieu2005Stability} is satisfied. Note that $a \delta$ can be chosen to be very close to $1$, which implies that in practice, B1 is no stricter than the square summability assumption typically found in SA literature.

    From Theorem 5.4 in \cite{Andrieu2005Stability}, we have that the number of restarts are finite $\prob_{\vx_0,X_0}$-almost surely, implying that the iterate sequence will eventually remain in a compact subset of $\Int(\Sigma)$. 
\end{proof}


\section{Additional numerical results}\label{appendix:additional_simulation_results}

In this appendix, we provide supplementary numerical results to those in Section \ref{section:numerical_results}. We compare the sampling performance SRRW with the MHRW as its base chain, with the Metropolis-Hastings with delayed acceptance (MHDA) sampler introduced in \cite{LeeSIGMETRICS12}. The MHDA works to reduce the inherent backtracking probability of MHRW by interacting with its most recently visited state \citep[see][Section 4.3 for the detailed definition of the transition probabilities]{LeeSIGMETRICS12}. We provide our numerical results for two different graphs over the same set of nodes. As mentioned in the captions for Figures \ref{fig:2a} and \ref{fig:2b}, the base MHRW chain for both these graphs have different mixing properties.

The simulation setup is similar to the one in Section \ref{section:numerical_results}, the numerical results are shown Figure \ref{fig:experiments2} where we focus on the MSE of the estimators. For values of $\alpha > 0$ which are only moderately large, the SRRW significantly outperforms the MHDA sampler, showing the performance improvement in the asymptotic regime from interacting with the entire history of the random walker instead of only the most recently visited state. We also observe that performance benefits of SRRW with larger $\alpha$ kick in earlier when the the underlying base chain is faster mixing, which is the case on the Erdos-Renyi random graph in Figure \ref{fig:2b}.

\begin{figure*}[!b]
    \centering 
    \subfigure[Convergence of $\psi_n(g)$ to the ground truth $\vg^T\ones/N$  
    for the wikiVote graph ($889$ nodes, $2914$ edges, SLEM $=0.99$). Base MHRW mixes slower.]{\includegraphics[width=0.55\textwidth]{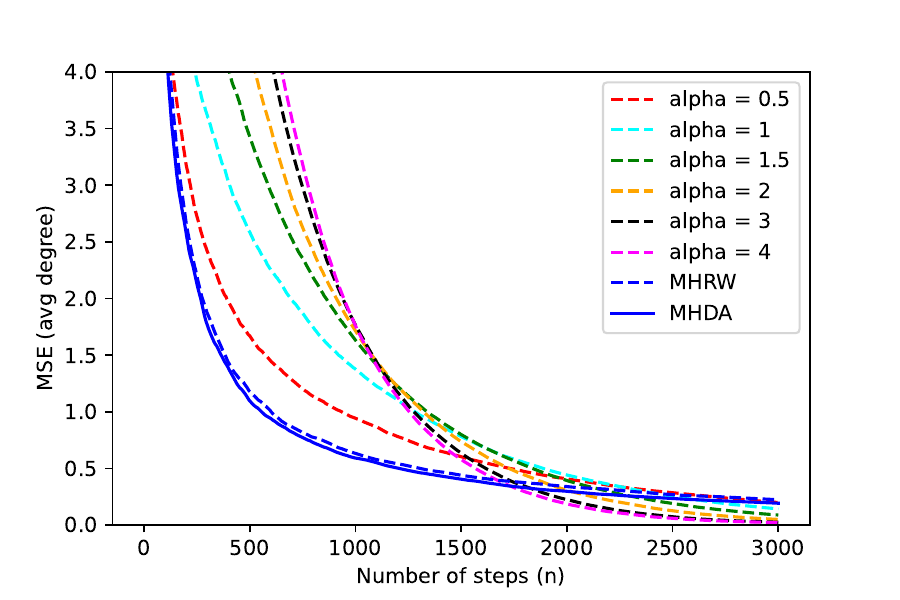} \label{fig:2a}}
    \subfigure[Convergence of $\psi_n(g)$ to the ground truth $\vg^T\ones/N$ 
    for an Erdos-Renyi random graph ($889$ nodes, $3927$ edges, SLEM $=0.93$). Base MHRW mixes faster.]{\includegraphics[width=0.55\textwidth]{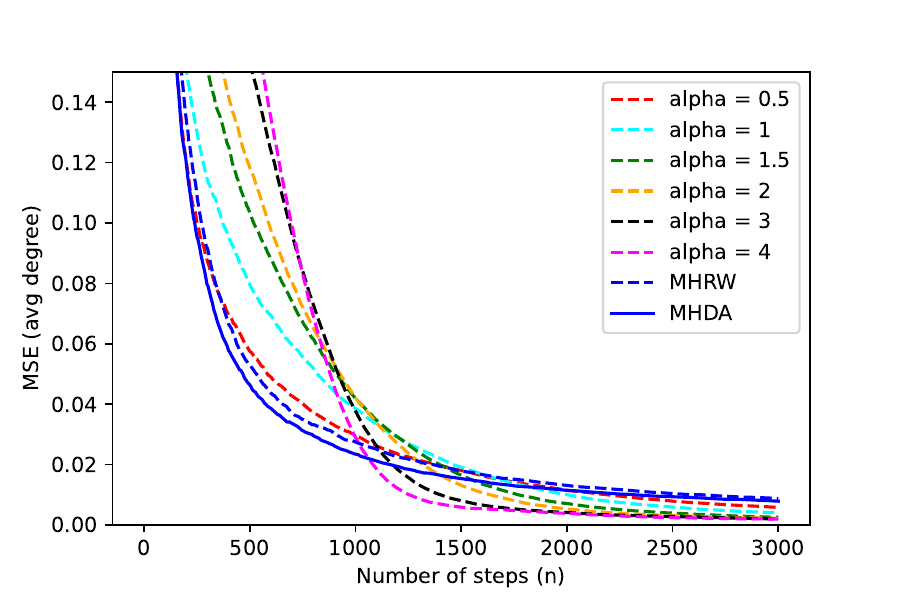} \label{fig:2b}}
    \caption{Simulations of the SRRW process for values of $\alpha \in [0,4]$, where $\alpha = 0$ corresponds to \textit{MHRW} - the underlying Metropolis-Hastings \textit{base} chain, with no self-repellence properties.}
    \label{fig:experiments2}
\end{figure*}


\end{document}